\newcommand{\Q}{\mathbb{Q}}
\newcommand{\K}{\mathbb{H}}
\newcommand{\C}{\mathbb{C}}
\newtheorem{thm}{Theorem }[section]
\newtheorem{prop}[thm]{Proposition}
\newtheorem{lem}[thm]{Lemma}
\newtheorem{claim}[thm]{Claim}
\newtheorem{defi}[thm]{Definition}
\newtheorem{cor}[thm]{Corollary}
\newtheorem{exmp}[thm]{Example}
\newtheorem{rem}[thm]{Remark}
\newtheorem{ques}[thm]{Question}
\begin{document}
\title{Sullivan minimal  models  of classifying spaces
 for non-formal spaces of small rank}
\author{Hirokazu Nishinobu  and Toshihiro Yamaguchi}
\footnote[0]{2010MSC: 55P62,55R15\\
Keywords:\  rational homotopy theory, Sullivan (minimal) model, 
classifying space for  fibration, formal, coformal, 
elliptic space, $F_0$-space, pure space, derivation}
\date{}
\address{Kochi University, 2-5-1, Kochi,780-8520, JAPAN}
\email{cosmo51mutta@yahoo.co.jp}

\address{Kochi University, 2-5-1, Kochi,780-8520, JAPAN}
\email{tyamag@kochi-u.ac.jp}
\maketitle

\begin{abstract}
We consider certain rational homotopical conditions of simly connected CW complex 
$X$ such that   the  rational cohomology of  the  classifying space $Baut_1X$
 for fibrations with two-stage  fibre $X$ is (not) free.
First, we consider when  is  $Baut_1X$   a rational factor of $Baut_1(X\times S^n)$ for an odd-integer $n$
and observe for a non-formal elliptic space  $X$ of rank 3.
Second,  we compute the  Sullivan minimal models of $Baut_1X$ when $X$ are certain non-formal pure spaces
of  rank 5.
\end{abstract}

\section{Introduction}

Let $X$ be a simply connected finite CW complex 
and 
$aut_1X$ the indentity component of self-homotopy 
equivalences of $X$.
The Dold-Lashof classifying space \cite{DL}, $Baut_1X$, is the 
classifying space for orientable fibrations with
fiber the homotopy type of $X$.
In 1968, J.Milnor \cite{Mi} showed that, when $X=S^n$,
$ H^*(Baut_1X;\Q )\cong \Q [v]$ 
where $|v|=2n$ if $n$ is even and $|v|=n+1$ if $n$ is odd. 
Let $M(X)$ be the Sullivan minimal model \cite{S} of $X$ and $DerM(X)$ the DGL(differential graded Lie algebra) of the  negative derivations on $M(X)$ (see \S 2 below).
In 1977, D.Sullivan \cite{S} indicated that  $DerM(X)$ determines 
the rational homotopy type of $ Baut_1X$, that is, 
$DerM(X)$ is a  Quillen's DGL-model \cite{Q}.
We are interested in the rational homotopical properties of it.
(See \cite{LS} as a recent resarch of this direction.)
It seems to be   very difficult to expect understanding them immediately  from the Sullivan model itself
without the  direct computation (see Theorem \ref{dermodel} below) of derivations of $M(X)$ et al. (cf. \cite{SS}, \cite{G}).
Indeed, it is complicated even when $X$ is a product of spheres \cite{s2}.
For example, when  $X$ is $S^3\times S^5(\simeq_{\Q}SU(3))$,
 $Baut_1X$ is not a rational H-space 
and  it is not even formal (see Definition \ref{formal} below).
Then the rational  cohomology  is infinitely generated  as
$$H^*(Baut_1X;\Q )\cong  \Q [v]\otimes \Lambda (w_0,w_1,w_2,\cdots )/(\{ v w_i\}_i ,\ \{ w_iw_j\}_{i<j} )$$
with $|v|(={\rm deg}(v))=4$ and  $|w_i|=3+6i $ even
though $H^*(Baut_1S^3;\Q )$ and $H^*(Baut_1S^5;\Q )$
are given as $\Q [v]$ and $\Q [u]$ with $|u|=6$, respectively.
A simply connected CW complex $X$ is said to be {\it elliptic} if 
the dimensions of rational cohomology and homotopy are  finite.
Futhermore, an elliptic space $X$ is said to be an {\it $F_0$-space} if 
$$ H^*(X;\Q )\cong  \Q [x_1,\cdots ,x_n]/(f_1,\cdots ,f_n),$$
in which $|x_i|$ are even and $f_1,\cdots ,f_n$ forms  a regular sequence in $ \Q [x_1,\cdots ,x_n]$.
Then the Sullivan model is given as 
$M(X)=(\Lambda (x_1,..,x_n,y_1,..,y_n),d)$ with $|x_i|$ even
and $|y_i|$ odd with $dx_i=0$ and $dy_i=f_i\in \Q [x_1,..,x_n]$.
In 1977,
S.Halperin \cite{Hal} conjectured that the Serre spectral sequences of all fibration $X\to E\to B$ collapse
for any $F_0$-space $X$.
The Halperin's conjecture for an $F_0$-space $X$ is equivalent to
$$ H^*(Baut_1X;\Q )\cong  \Q [v_1,\cdots ,v_m]$$
for some even-degrees' elements $v_1,\cdots ,v_m$ \cite{M}.
It is true when $X$ is a  homogeneous space $G/H$
where $G$ and $H$ be  compact connected Lie groups  with 
$H$ is a subgroup of $G$ and ${\rm rank}G={\rm rank}H$ due to Shiga-Tezuka \cite{ST}.
Of course, even if a space $X$ is not an $F_0$-space, 
 $ H^*(Baut_1X;\Q )$ may be a polynomial algebra.
For example, when $X$ is $S^3\times S^3$,
 $ H^*(Baut_1X;\Q )\cong  \Q[v,u]$
with $|v|=|u|=4$. (See \cite{LS} and \cite{Y} for the realization problem \cite[p.519]{FHT} of a polynomial algebra.)
 It is natural to ask
\begin{ques}\label{freeness}
When is $ H^*(Baut_1X;\Q )$ a polynomial algebra or free ?
\end{ques}

The meaning of ``$ H^*(Baut_1X;\Q )$ is free''
is that there is a graded algebra isomorphism $ H^*(Baut_1X;\Q )\cong \Lambda (x_1,..,x_m)\otimes \Q [y_{m+1},..,y_n]$ with $|x_i|(=|y_i|)=k_i$ for some $m\leq n$ (i.e., the differential of Sullivan mimimal model is zero). Then 
the rational homotopy set of classifying maps from a space $Y$ is given as 
$$[Y, (Baut_1X)_{\Q}]=[Y, \Pi_{i=1}^nK(\Q , k_i)]=\bigoplus_{i=1}^n H^{k_i}(Y;\Q ),$$
where $K(\Q , k_i)$ is the Eilenberg-Mac Lane space.
That is, the tuple of an n-elements of the cohomology of $Y$ rationally determine a fibration with fibre $X$ and  base $Y$.
Here $Z_{\Q}$ means the rationalization of a nilpotent space $Z$. 

In this paper, first, as an important obstruction for Problem \ref{freeness},
we pay attention to  
that $H^*(Baut_1(X\times Y);\Q )$ can  not be formulated by  $H^*(Baut_1X;\Q )$ and $H^*(Baut_1Y;\Q )$
since   $Baut_1X$ is not even a rational factor of $Baut_1(X\times Y)$, i.e., $(Baut_1(X\times Y))_{\Q}\not\simeq
(Baut_1X)_{\Q}\times C$ for any rational space $C$, 
 in general.
 Refer S.B.Smith's works  \cite{s2} and \cite{s1} in 2001.
 Thus our motivation is the following
 
\begin{lem} 
If $H^*(Baut_1(X\times Y);\Q )$ is free, then
$H^*(Baut_1X;\Q )$ and $H^*(Baut_1Y;\Q )$ are both free. But the converse is not true in general.
\end{lem}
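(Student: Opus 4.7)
The plan is to translate both sides of the implication into the language of Quillen DGL models, via Sullivan's theorem that $DerM(Z)$ is a Quillen DGL model of $Baut_1Z$, and to use the natural product map
$$
\phi\colon Baut_1X\times Baut_1Y\longrightarrow Baut_1(X\times Y)
$$
obtained by applying Dold--Lashof to the monoid homomorphism $aut_1X\times aut_1Y\to aut_1(X\times Y)$, $(f,g)\mapsto f\times g$. The standard equivalences I would record first are that $H^*(Baut_1Z;\Q)$ is free iff the Sullivan minimal model of $Baut_1Z$ has trivial differential iff $(Baut_1Z)_\Q$ is a generalized Eilenberg--Mac Lane space, equivalently iff all $L_\infty$-brackets $\ell_n$ on $\pi_*(Baut_1Z)\otimes\Q\cong H_{*-1}(DerM(Z))$ vanish for $n\ge 2$. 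This reduces the forward direction to the claim that the source of $\phi$ is rationally a GEM whenever its target is.

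At the DGL level, $\phi$ is modelled by the sub-DGL inclusion
$$
DerM(X)\oplus DerM(Y)\ \hookrightarrow\ Der(M(X)\otimes M(Y)),
$$
where a derivation of $M(X)$ is extended by zero on $M(Y)$-generators, and vice versa. Decomposing $Der(M(X)\otimes M(Y))$ into derivations supported on $M(X)$-generators plus those supported on $M(Y)$-generators, the augmentations $M(Y)\to\Q$ and $M(X)\to\Q$ furnish chain retractions onto $DerM(X)$ and $DerM(Y)$ respectively. The sub-DGL inclusion is therefore split at the chain level, so on homology $\phi_*\colon \pi_*(Baut_1X\times Baut_1Y)\otimes\Q\to\pi_*(Baut_1(X\times Y))\otimes\Q$ is injective.

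The conclusion now comes from naturality of the rational $L_\infty$-structure. The map $\phi$ induces an $L_\infty$-morphism between the rational homotopy Lie algebras whose first component is $\phi_*$. By hypothesis all $L_\infty$-brackets $\ell_n^W$ on the target cohomology vanish for $n\ge 2$. Reading the $L_\infty$-morphism relations on cohomology (where $\ell_1=0$) inductively in $n$: at stage $n$, assuming $\ell_m^V=0$ for $m<n$, all terms involving the higher components of the morphism drop out, leaving $\phi_*(\ell_n^V(x_1,\dots,x_n))=0$. Injectivity of $\phi_*$ then forces $\ell_n^V=0$ for every $n\ge 2$, so $(Baut_1X\times Baut_1Y)_\Q$ is itself a GEM; since a rational product is a GEM iff each factor is, both $H^*(Baut_1X;\Q)$ and $H^*(Baut_1Y;\Q)$ are free.

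For the second assertion the example $X=S^3$, $Y=S^5$ from the introduction does the job: $H^*(Baut_1S^3;\Q)\cong\Q[v]$ and $H^*(Baut_1S^5;\Q)\cong\Q[u]$ are polynomial, hence free, but $H^*(Baut_1(S^3\times S^5);\Q)$ is the non-free algebra already displayed. I expect the main technical point to be the inductive $L_\infty$ argument in the third paragraph: one must choose a cohomology contraction of $Der(M(X)\otimes M(Y))$ that restricts to a compatible one of $DerM(X)\oplus DerM(Y)$, and keep track of signs when verifying that the higher components of the induced $L_\infty$-morphism do drop out at each arity; the conclusion on $H^*$ is however insensitive to these choices once $\ell_n^W$ is known to be trivial for $n\ge 2$.
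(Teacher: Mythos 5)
Your argument is correct, and its first half is exactly the mechanism the paper relies on, but the second half takes a genuinely different (and heavier) route. The paper never writes out a proof of this lemma: the intended argument is the remark following the definition of rational retraction --- a $\pi_{\Q}$-injective map $f\colon S\to T$ with $H^*(T;\Q)$ free exhibits $S$ as a rational factor of $T$ by Lemma \ref{retr}, since in the relative model $(\Lambda(U\oplus V),D)\to(\Lambda V,d)$ the vanishing of the differential of $M(T)$ forces $d=\overline{D}=0$ --- applied to $Bj_X\colon Baut_1X\to Baut_1(X\times Y)$ and $Bj_Y$, whose $\pi_{\Q}$-injectivity comes from the same chain-level splitting of the derivation complex that you make explicit via the augmentation retractions (for $Y=S^n$ this is the $A\oplus B\oplus C$ decomposition of Proposition \ref{abc}); the failure of the converse is the $S^3\times S^5$ example of the introduction, exactly as you say. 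Your $L_\infty$-induction replaces the Lemma \ref{retr} step: it is valid (on minimal models $\ell_1=0$, vanishing of all $\ell_n^W$ kills the right-hand side of the morphism identities, induction leaves only $f_1\circ\ell_n^V$, and injectivity of $f_1$ finishes), and the compatibility-of-contractions worry you raise at the end is not actually needed for the conclusion; but the whole apparatus is avoidable, since once $\pi_{\Q}$-injectivity of the comparison map is established, the elementary Sullivan-model argument of Lemma \ref{retr} gives freeness of $H^*(Baut_1X;\Q)$ immediately. What your route buys is a proof phrased entirely in Quillen/$L_\infty$ terms, independent of the relative-model lemma; what the paper's route buys is brevity and the stronger intermediate conclusion that in the free case $Baut_1X$ (and likewise $Baut_1Y$) is actually a rational factor of $Baut_1(X\times Y)$.
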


We prepare the following general  notation to start our observation.   

\begin{defi}
A space $S$ with a map $f:S\to T$ is said to have  a rational retraction for $T$ when there is a map $r:T_{\Q}\to S_{\Q}$
with $r\circ f_{\Q}\simeq id_{S_{\Q}}$.
Also $S$ is said to have a weak (rational) retraction for $T$ when there is a graded Lie algebra map  $R:\pi_*(\Omega T)_{\Q}\to \pi_*(\Omega S)_{\Q}$
with $R\circ \pi_*(\Omega f)_{\Q}= id_{\pi_*(\Omega S)_{\Q}}$.
\end{defi}

Thus we have ``$\mbox{rational factor } \Rightarrow  \mbox{ rational retraction } \Rightarrow   \mbox{ weak retraction}$''.
In particular, when a map $f:S\to T$ is $\pi_{\Q}$-injective,
``$H^*(T;\Q)$ is free   $\Rightarrow$ $S$ is a rational factor of $T$''
from Lemma \ref{retr} below.
In general, a map $f:X\to Y$ does not induce a map between identity components of spaces 
of self-equivalences $aut_1X\to aut_1Y$.
Though the inclusion $i_X:X\to X\times Y$ induces
the map $j_X:aut_1X\to aut_1(X\times Y)$ with  $j_X(g)=g\times id_Y$,
where $id_Y$ is the identity of $Y$.
Thus there is the  morphism $Bj_X: Baut_1X\to Baut_1(X\times Y)$.
For example, $Baut_1S^3$ and  $Baut_1S^5$ are not  rational factors of $Baut_1(S^3\times S^5)$.
  $Baut_1S^3$ has a rational retraction for $Baut_1(S^3\times S^5)$ but $Baut_1S^5$ does  not even a weak retraction.
See \S 2 for the detail.
The following question  is a key to approach  Question \ref{freeness}.

\begin{ques}
When is $Baut_1X$ (or $Baut_1Y$)  a rational factor of $Baut_1(X\times Y)$  ?
More generally, when does  $Baut_1X$  (or $Baut_1Y$)  have  a rational (weak) retraction for  $Baut_1(X\times Y)$  ?
\end{ques}

 For example, recall  S.B.Smith's 
\begin{thm}(\cite[Theorem 3]{s2})\label{smith}
Suppose that  $X$ is an $F_0$-space with $Der_+H^*(X;\Q )=0$ and
$Y$ is a rational H-space.
If $\min \pi_*(X)_{\Q} +\min \pi_*(Y)_{\Q}\geq \max \pi_*(X\times Y)_{\Q}$
and $\max \pi_*(X)_{\Q} \leq \min \pi_*( Y)_{\Q}$, then 
$$Baut_1(X\times Y)\simeq_{\Q} Baut_1X\times ||Der(H^*(Y;\Q ),H^*(X\times Y;\Q ))||,$$
where $|| L||$ is the spatial  realization of a DGL $L$ \cite{Q}.
\end{thm}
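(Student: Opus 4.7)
My plan is to use Sullivan's identification of $DerM(X\times Y)$ as a DGL model for $Baut_1(X\times Y)$ and to show that, under the stated hypotheses, this DGL splits as a direct product of DGLs whose spatial realizations are the two claimed factors. Since $X$ is $F_0$, it is formal; since $Y$ is a rational H-space, $M(Y)=(\Lambda V_Y,0)=H^*(Y;\Q)$. Hence $M(X\times Y)=M(X)\otimes M(Y)$ is formal as well, and the underlying graded vector space of $DerM(X\times Y)$ decomposes as
$$Der(M(X),M(X\times Y))\oplus Der(M(Y),M(X\times Y)),$$
separating derivations by whether they are supported on $X$- or on $Y$-generators.

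I would first exploit the hypothesis $\max\pi_*(X)_\Q\leq\min\pi_*(Y)_\Q$ to collapse the first summand to $DerM(X)$. If a negative derivation $\theta$ of degree $-k$ on an $X$-generator $v$ had a value $\theta(v)$ involving a $V_Y$-generator, then $|v|-k\geq\min\pi_*(Y)_\Q\geq\max\pi_*(X)_\Q\geq|v|$ would force $k\leq 0$, a contradiction. Hence $\theta(v)\in M(X)$, and this summand is exactly $DerM(X)$, the known DGL model for $Baut_1X$. For the second summand, the condition $\min\pi_*(X)_\Q+\min\pi_*(Y)_\Q\geq\max\pi_*(X\times Y)_\Q$ controls the allowable cross terms in $\theta(w)$ for $w\in V_Y$, and after transporting through the formal quasi-isomorphism $M(X\times Y)\simeq H^*(X\times Y;\Q)$ this summand is identified with $Der(H^*(Y;\Q),H^*(X\times Y;\Q))$.

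The main obstacle is upgrading this graded vector space decomposition to a DGL splitting: one must verify that the graded commutator of an $X$-derivation with a $Y$-derivation stays within the second summand, and that the differential $[d,-]$ does not mix the two factors. This is precisely where the Halperin hypothesis $Der_+H^*(X;\Q)=0$ intervenes, killing the positive-degree derivations of $H^*(X;\Q)$ that would otherwise produce obstructing cross-contributions, and thus allowing the splitting to descend to the cohomology-level models. Quillen's realization $||\cdot||$ then converts the DGL splitting into the claimed rational product decomposition of $Baut_1(X\times Y)$.
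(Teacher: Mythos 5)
First, note that the paper does not prove this statement at all: Theorem \ref{smith} is quoted from \cite[Theorem 3]{s2}, so there is no in-paper proof to match; the closest in-paper analogue is the machinery of Proposition \ref{abc} and Claim \ref{ABC}, which carries out exactly the kind of bracket analysis your proposal gestures at, but only in the special case $Y=S^n$ and only to conclude weak retractions, not a product splitting.

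Judged on its own, your proposal has the right skeleton (Sullivan's $DerM(X\times Y)$ as DGL model, splitting derivations by which generators they are supported on, and the degree argument showing that under $\max \pi_*(X)_{\Q}\leq \min\pi_*(Y)_{\Q}$ the $V_X$-supported part is literally $DerM(X)$), but the heart of the theorem is missing, and the obstacle is mis-stated. Containment of the cross-brackets in the $V_Y$-supported summand is automatic: if $\sigma$ is supported on $V_X$ with values in $M(X)$ and $\tau$ is supported on $V_Y$, then $[\sigma,\tau]$ vanishes on $V_X$ and equals $\pm\sigma\circ\tau$ on $V_Y$; likewise $\partial$ preserves both summands since $M(Y)$ has zero differential. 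So what you get for free is only a semidirect product of DGLs, with $DerM(X)$ acting on the ideal of $V_Y$-supported derivations, and this is precisely the paper's $[A,C]=C$, $[A,B]=B$ bookkeeping. The actual content of Smith's theorem is that this action is trivial up to quasi-isomorphism — i.e., the cross-brackets vanish in homology and the DGL is quasi-isomorphic to the direct product $DerM(X)\times Der(H^*(Y;\Q),H^*(X\times Y;\Q))$ — and that the second summand, which a priori has the differential ``post-compose with $d$'' and a possibly nontrivial bracket, can be identified as a DGL (not merely in homology) with $Der(H^*(Y;\Q),H^*(X\times Y;\Q))$. This is exactly where $Der_+H^*(X;\Q)=0$ and the hypothesis $\min\pi_*(X)_{\Q}+\min\pi_*(Y)_{\Q}\geq\max\pi_*(X\times Y)_{\Q}$ must be used, and your proposal only asserts that they ``intervene'' without exhibiting the vanishing or constructing the quasi-isomorphism. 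As the paper's Theorem \ref{table} shows (e.g.\ the cases where $r_X$ holds but $f_X$ fails), having the two expected summands with the correct homology is far from sufficient for a rational product decomposition, so this step cannot be waved through; it is the proof.
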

Futhermore
there are some examples in \cite[section 5]{s2}.
In this paper, we consider a sufficient  condition for having a weak retraction in the case that $Y$ is an odd-sphere.


\begin{defi}\label{deriv}
(a) The $(m,k)$-evaluation  of $X$: $$\mu_{m,k}(X): H_m(Der M(X))\otimes H^k(X;\Q )\to H_{m-k}(Der M(X))$$ is given by 
$\mu_{m,k}([\sigma ]\otimes [w])=[\sigma\otimes w]$.
Here $([\sigma ]\otimes [w])(x):=[\sigma (x)w]$ for $x\in M(X)$.

(b) The $(m)$-cohomology induced map of $X$: 
$$\psi_m(X):H_m(DerM(X))\to Der_m H^*(X;\Q )$$
is given by  $\psi_m(X)([\sigma ])(w):=[\sigma (w)]$.
\end{defi}

\begin{prop}\label{main}
For the  odd-sphere $S^n$ $(n>1)$, 

(a) 
$Baut_1 X$ has a  weak retraction for $ Baut_1(X\times S^n)$
if 
$\mu_{m,k}(X)=0$ for any $0\leq k<n<m$.

(b) $Baut_1 S^n$ has a weak retraction for $Baut_1(X\times S^n)$ if 
$\psi_m(X)=0$
for any $0<m<n$. 

\end{prop}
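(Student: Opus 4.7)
The plan is to work at the level of Quillen DGL-models: Sullivan's theorem identifies $\pi_*(\Omega Baut_1(-))_{\Q}$ with $H_*(DerM(-))$ as graded Lie algebras, and under this identification the morphisms $Bj_X$ and $Bj_{S^n}$ correspond to the DGL inclusions
$$j_X^*:DerM(X)\hookrightarrow Der(M(X)\otimes \Lambda t),\qquad \sigma\mapsto \sigma\otimes 1,$$
$$j_{S^n}^*:Der(\Lambda t)\hookrightarrow Der(M(X)\otimes \Lambda t),\qquad \partial_t\mapsto 1\otimes \partial_t,$$
where $M(S^n)=(\Lambda t,0)$ with $|t|=n$ odd. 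A weak retraction is precisely a graded Lie algebra section on $H_*(Der(\cdot))$. Evaluating a derivation on the generators of $M(X)$ and on $t$ yields the vector-space decomposition
$$Der_{-m}(M(X)\otimes \Lambda t)\ \cong\ Der_{-m}(M(X))\ \oplus\ Der_{-m-n}(M(X))\cdot t\ \oplus\ M(X)^{n-m}\cdot \partial_t$$
for $m>0$; the putative fourth summand $M(X)\cdot t\partial_t$ lives in non-negative degree and so is absent from the negative-derivation analysis.

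For (a), I would define $R:Der(M(X)\otimes \Lambda t)\to DerM(X)$ as the projection onto the first summand. It is a chain map (since $dt=d\partial_t=0$ preserve the decomposition) and satisfies $R\circ j_X^*=\mathrm{id}$. To verify that $R$ descends to a graded Lie algebra map on homology, I would check bracket compatibility case by case: $[\sigma,\tau t]=[\sigma,\tau]\cdot t$, $[\sigma t,\tau t]=0$, and $[\sigma, w\partial_t]=\pm \sigma(w)\cdot \partial_t$ all lie in summands killed by $R$, which agrees with $[R(\cdot),R(\cdot)]=0$ on these inputs. The sole delicate term is $[\sigma t,w\partial_t]$: its $DerM(X)$-component is (up to sign) the derivation $\sigma\cdot w:x\mapsto \sigma(x) w$, while its putative $t\partial_t$-component vanishes for degree reasons in the negative range (it would require $\sigma(w)\in M(X)^{-m-m'}=0$). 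The cohomology class of $\sigma\cdot w$ is exactly $\mu_{m+n,k}([\sigma]\otimes [w])$ with $k=n-m'<n<m+n$, so the hypothesis forces $[\sigma\cdot w]=0$ in $H_*(DerM(X))$, and $R$ becomes a Lie algebra retraction on homology.

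For (b), I would define $R':Der(M(X)\otimes \Lambda t)\to Der(\Lambda t)=\Q\cdot \partial_t$ by killing the summand $DerM(X)\oplus DerM(X)\cdot t$ and sending $w\partial_t\mapsto \epsilon(w)\partial_t$, where $\epsilon:M(X)\to \Q$ is the augmentation. It is a chain map because $\epsilon$ annihilates elements of positive degree (so $\epsilon(dw)=0$), and by construction $R'\circ j_{S^n}^*=\mathrm{id}$. Since $\Q\cdot\partial_t$ is an abelian graded Lie algebra, it suffices to show $R'$ annihilates all brackets of cocycles at the level of homology. Brackets remaining inside $DerM(X)\oplus DerM(X)\cdot t$ are killed, $[\sigma t, w\partial_t]\in DerM(X)$ (by the computation above) is killed, and $[w_1\partial_t, w_2\partial_t]=0$. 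The one remaining bracket $[\sigma,w\partial_t]=\pm \sigma(w)\partial_t$ is sent to $\pm \epsilon(\sigma(w))\partial_t$, which is non-trivial only when $|\sigma(w)|=0$, i.e.\ $|w|=m=|\sigma|$; this forces the total bracket-degree $-n$ and the range $0<m<n$. In that range, for a cocycle $\sigma$ and a cocycle $w$ representing a class in $H^m(X;\Q)$, one has $\epsilon(\sigma(w))=\sigma(w)=\psi_m([\sigma])([w])$, which vanishes by the hypothesis.

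The main obstacle is the sign-bookkeeping needed to identify the two potentially obstructing brackets $[\sigma t, w\partial_t]$ and $[\sigma,w\partial_t]$ with the operations $\mu_{\cdot,\cdot}$ and $\psi_\cdot$, respectively, and to justify that the $t\partial_t$-correction to the first bracket genuinely vanishes in the negative-degree range (by the non-negativity of $M(X)^*$). Once this is settled, each hypothesis translates directly into the required vanishing, and the two retractions described above deliver the claim.
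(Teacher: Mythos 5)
Your proposal is correct and takes essentially the same route as the paper: the paper proves this via Proposition \ref{abc} and Claim \ref{ABC}, using exactly your decomposition of $Der(M(X)\otimes\Lambda u)$ into the three summands $A$ (derivations of $M(X)$), $B$ (derivations with values in the ideal $(u)$) and $C$ (derivations $w\,\partial_u$), identifying the only obstructing brackets $[B,C]$ and $[A,C]$ with $\mu_{m,k}$ and $\psi_m$, and taking the projections onto $A$ (resp.\ onto the $\partial_u$-part) as the Lie-algebra retractions on homology. Your write-up just makes explicit some points the paper leaves terse (the chain-level compatibility of the decomposition, the degree-reason vanishing of the $u\partial_u$-component, and the augmentation in case (b)).
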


\begin{defi}\label{formal}(\cite{T})
A simply connected space $X$ with Sullivan minimal model $M(X)=(\Lambda V,d)$ is said to be {\it formal}
if there is a quasi-isomorphism $M(X)\to (H^*(X;\Q ),0)$
and   $X$ is said to be {\it coformal} if 
there is a quasi-isomorphism from the Quillen's DGL-model \cite{Q} of $X$  to the rational homotopy Lie algebra 
 $\pi_*(\Omega X)_{\Q}$
(equivalently, the differential of Sullivan minimal model  is quadratic, i.e., 
$dV\subset \Lambda^2V$).
\end{defi}

When   $Baut_1(X\times S^n)$ is  coformal, 
``$Baut_1 X$ has a rational retraction for  $Baut_1(X\times S^n)$'' 
is equivalent to ``$Baut_1 X$ has a weak  retraction for  $Baut_1(X\times S^n)$''
from Lemma \ref{retr} below.

\begin{thm}\label{two}
Suppose that $H^*(Baut_1X;\Q )$ is  free.
Then the differential of $M(Baut_1(X\times S^n))$ does not have a quadratic part  for an odd-sphere $S^n$
if
(a)  $\mu_{m,k}(X)=0$
 for any $0\leq k<n<m$
and
(b) $\psi_m(X)=0$ 
for any $0<m<n$.

In particular, when  
$Baut_1(X\times S^n)$ is coformal,
 $H^*(Baut_1(X\times S^n);\Q )$ is  free if  (a) and (b) hold.
\end{thm}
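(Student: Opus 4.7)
The plan is to combine Proposition \ref{main} with a direct computation in the derivation DGL of $M(X\times S^n)$, and then to use the definition of coformality for the consequence.

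First, I would invoke Proposition \ref{main}: hypothesis (a) is exactly the criterion there for $Baut_1X$ to admit a weak rational retraction from $Baut_1(X\times S^n)$, yielding a graded Lie algebra retraction $R_X$ of $\pi_*(\Omega Bj_X)_\Q$, while hypothesis (b) analogously provides a retraction $R_{S^n}$ for $Baut_1 S^n$. Since $H^*(Baut_1X;\Q)$ is free by assumption, the differential of $M(Baut_1X)$ vanishes, so $\pi_*(\Omega Baut_1X)_\Q$ is an abelian graded Lie algebra; by the Milnor computation recalled in the introduction, $H^*(Baut_1 S^n;\Q)\cong\Q[v]$, so $\pi_*(\Omega Baut_1 S^n)_\Q$ is one-dimensional and abelian as well. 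The images of the split injections $\pi_*(\Omega Bj_X)_\Q$ and $\pi_*(\Omega Bj_{S^n})_\Q$ are therefore two abelian Lie subalgebras of
\[
L:=\pi_*(\Omega Baut_1(X\times S^n))_\Q \cong H_*(DerM(X\times S^n)).
\]

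Second, the quadratic part $d_1\colon V\to\Lambda^2 V$ of the differential of a Sullivan minimal model $(\Lambda V,d)$ is dual to the Whitehead bracket on rational homotopy, equivalently to the graded Lie bracket on the rational homotopy Lie algebra of the based loop space. Under Sullivan's identification of $L$ with $H_*(DerM(X\times S^n))$, the statement that the differential of $M(Baut_1(X\times S^n))$ has no quadratic part therefore reduces to showing that $[L,L]=0$.

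Third, and this is the core step, I would prove the vanishing of $[L,L]$ by direct computation in $DerM(X\times S^n)$. Writing $M(X\times S^n)=M(X)\otimes\Lambda(y)$ with $|y|=n$ odd and $dy=0$, each derivation decomposes canonically by its restriction to generators of $M(X)$, with values in $M(X)$ or in $M(X)\cdot y$, and by its value on $y$. The two abelian subalgebras from Step~1 account for the ``pure'' classes arising respectively from $DerM(X)$ and from the basic derivation $y\mapsto 1$. The remaining mixed classes in $H_*(DerM(X\times S^n))$ are detected precisely by the evaluations $\mu_{m,k}(X)$ and the cohomology-induced maps $\psi_m(X)$; the degree ranges $0\leq k<n<m$ and $0<m<n$ appearing in hypotheses (a) and (b) then force these mixed classes, together with all brackets involving them, to vanish in cohomology. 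This yields $[L,L]=0$.

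Finally, for the ``in particular'' clause, coformality of $Baut_1(X\times S^n)$ says by Definition \ref{formal} that the differential of its Sullivan minimal model is purely quadratic, $dV\subset\Lambda^2 V$. Combined with the vanishing of the quadratic part just established, this forces $d=0$, whence $H^*(Baut_1(X\times S^n);\Q)$ is free. The main obstacle will be Step~3: the two weak retractions only deliver two abelian Lie subalgebras of $L$, not a vanishing bracket on all of $L$, so the heart of the proof is a careful degree-wise inspection of the DGL structure of $DerM(X\times S^n)$ arising from the product decomposition, in which the ranges $0\leq k<n<m$ and $0<m<n$ are exactly what is needed to kill the surviving mixed classes and their cross-brackets with the retraction summands.
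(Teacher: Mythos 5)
Your Step 2 reduction (no quadratic part in $M(Baut_1(X\times S^n))$ $\Leftrightarrow$ the Samelson bracket on $L=H_*(DerM(X\times S^n))$ is trivial) and the final coformality argument are fine, and your decomposition of derivations coming from $M(X\times S^n)=M(X)\otimes\Lambda(u)$ is the same one the paper uses, namely $L=A\oplus B\oplus C$ as in Proposition \ref{abc}. But the step you yourself identify as the heart of the proof (Step 3) is not carried out, and the mechanism you describe for it is wrong: hypotheses (a) and (b) do \emph{not} force the mixed classes to vanish. For example, in Example \ref{ex1} with $a=b$ and $n=|v_3|=2a-1$ (case $(6)_{iii}$ of Theorem \ref{table}) the hypotheses of the theorem hold, yet the classes $[(u,v_1)],[(u,v_2)]\in C$ and $[(u,1)]$ are nonzero and in fact appear as generators $v_{u,1},v_{u,2},v_{u,0}$ of the (free) cohomology of $Baut_1(X\times S^n)$. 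What (a) and (b) actually control are brackets, not classes: by Claim \ref{ABC} one has $\mu_{m,k}([\sigma]\otimes[w])=\pm[[\sigma\otimes u],[(u,w)]]$ and $(u,\psi_m([\sigma])(w))=\pm[[\sigma],[(u,w)]]$, so (a) is equivalent to $[B,C]=0$ and (b) to $[A,C]=0$, and every element of $[B,C]$, resp. $[A,C]$, arises this way. Your proposal never makes this identification, so the claimed conclusion $[L,L]=0$ is not established; ``killing the surviving mixed classes'' cannot be the route, since they survive.

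Moreover, even granting $[A,C]=[B,C]=0$, you still owe the components $[A,B]$, $[B,B]$ and $[C,C]$, which are covered neither by (a), (b) nor by the two abelian retraction summands of your Step 1. This is exactly where the freeness of $H^*(Baut_1X;\Q)$ is used beyond ``$A$ is abelian'': $[C,C]=0$ identically, elements of $B$ are of the form $\sigma\otimes u$ so that $[\sigma',\sigma\otimes u]=\pm[\sigma',\sigma]\otimes u$ and $[B,B]\subset B$, whence $[A,A]=0$ (which follows from the vanishing differential of $M(Baut_1X)$) forces $[A,B]=[B,B]=0$. This is precisely the content of Proposition \ref{abc}(3), and the paper's proof of Theorem \ref{two} is nothing but Proposition \ref{abc}(3) combined with Claim \ref{ABC}(1)(2); once you replace your Step 3 by these identifications the argument closes, and your Step 1 (the weak retractions of Proposition \ref{main}) becomes unnecessary, as retractions alone give two abelian subalgebras but say nothing about the cross-brackets.
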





An elliptic space $X$ is said to be {\it pure} if if $dV^{even}=0$ and $dV^{odd}\subset \Lambda V^{even}$
for $M(X)=(\Lambda V,d)$. For example,  simply connected Lie groups, 
homogeneous spaces  and $F_0$-spaces are   pure spaces.
More generally,  $X$ is said to be {\it two-stage} if if $dV_0=0$ and $dV_1\subset \Lambda V_0$
for $V=V_0\oplus V_1$.
From degree arguments, we get
\begin{exmp}
(1) $H^*(Baut_1X;\Q )$ is free
if and only if
$H^*(Baut_1(X\times S^2);\Q )$ is free.

(2) When 
$X$ is elliptic, $H^*(Baut_1(X\times S^3);\Q )$ is free
if and only if 
$M(X)\cong (\Lambda (x_1,..,x_m,y_1,..,y_n),d)$ with $|x_1|=\cdots =|x_m|=2$
and $|y_1|=\cdots =|y_n|=3$.


\end{exmp}

In the followings, the symbol $(v,f)$ means the {\it elementary derivation} that takes a generator $v$ of $V$ to an  element $f$ of $\Lambda V$
and the other generators to $0$.

\begin{exmp}\label{ex1}
Let $X$ be the pullback of the sphere bundle of the tangent bundle of $S^{a+b}$ by the canonical 
degree $1$ map $S^a\times S^b\to S^{a+b}$.
Then it is the tortal space of a fibration $S^{a+b-1}\to X\to S^a\times S^b$.
Notice that   $M(X)=(\Lambda (v_1,v_2,v_3),d)$
where $|v_1|=a$,  $|v_2|=b$,  $|v_3|=a+b-1$ are odd,
 $dv_1=dv_2=0$ and  $dv_3=v_1v_2$.
Then
$$H^*(X;\Q )\cong \frac{\Lambda (v_1,v_2)\otimes \Q [w_1,w_2]}{
(v_1v_2, v_1w_1,v_2w_2, v_1w_2+v_2w_1, w_1^2,w_2^2,w_1w_2     )}$$
with $w_1=[v_1v_3]$ and $w_2=[v_2v_3]$, so $X$ is not formal.
From \S 3(II),  $H^*(Baut_1X;\Q )$ is an one or two variable free algebra:
 $\Lambda (v_{3,0})$ or 
$\Lambda (v_{2,1},v_{3,0})$.
By referring to Theorem \ref{two},
  a   non-freeness condition of $H^*(Baut_1(X\times S^n);\Q)$ for an odd integer $n$
is given  by 
(a) $\mu_{|v_3|,0}([(v_3,1)],1)=[(v_3,1)]$,  $\mu_{|v_2|-|v_1|,0}([(v_2,v_1)],1)= [(v_2,v_1)]$ or 
(b) 
 $\psi_{|v_3|} ([(v_3,1)])=(w_1, v_1)+(w_2,v_2)$
under certain degree conditions.
From Theorem \ref{table} below, we have that 
$H^*(Baut_1(X\times S^n);\Q )$ is free  
if and only if $a=b$ and 
$2a-1\leq n<3a-1$.
 In \S 4, we give the Sullivan minimal models  for all cases of degrees.
 \end{exmp}




In this paper, second, we consider about Question  \ref{freeness} for
certain  pure spaces $X$.



\begin{thm}\label{cor}
Let $X$ be  a non-formal pure space where  $M(X)=(\Lambda (x_1,x_2,y_1,y_2 ,y_3),d)$ where $|x_i|$ is even
with $|x_1|\leq |x_2|$
and $|y_i|$ is odd with $|y_1|\leq |y_2| \leq  |y_3|$. Let $dy_i=f_i\in \Q [x_1,x_2]$ for $i=1,2,3$.
Then $H^*(Baut_1X;\Q )$ is not a polynomial algebra  if and only if 
the following (I) or (II) holds:\\
(I) There is an  odd-element $w\in H^*(X;\Q )$ with  $|w|<|y_3|$.\\
(II)   When $|x_1|<|x_2|$,
$$f_1=x_1^l\in\Q[x_1],\ \ \partial f_2/\partial x_2\cdot x_1^k\in (f_1),\ \   \partial f_3/\partial x_2\cdot x_1^k\in (f_1,f_2) \ \ \ \ \ \ \  \ \  (*)$$
for some $1<l$ and $0<k<\min \{ l,\ |x_2|/|x_1|\}$. 
Here  $\partial f_i/\partial x_j$ is the partial differentiation of $f_i$ by $x_j$ and
 $(S)$ is the ideal of $\Q [x_1,x_2]$ generated by a set $S$.
\end{thm}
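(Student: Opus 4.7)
The strategy is to use Sullivan's identification of $DerM(X)$ as a Quillen DGL-model of $Baut_1X$, so that $\pi_n(Baut_1X)_{\Q}\cong H_{n-1}(DerM(X))$. A minimal Sullivan algebra on even generators has vanishing differential (and hence polynomial cohomology), while conversely a polynomial cohomology algebra has its rational homotopy concentrated in even degree. Hence $H^*(Baut_1X;\Q)$ is a polynomial algebra if and only if $H_m(DerM(X))=0$ for every even $m>0$, and the task reduces to classifying the non-trivial even-degree cocycle derivations modulo boundaries.

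Every derivation is a linear combination of elementary derivations $(v,\phi)$ of degree $|v|-|\phi|$, so even total degree forces the parities of $v$ and $\phi$ to agree. The plan is to split into two cases: case $(A)$ with $v=y_i$ odd and $\phi$ odd, and case $(B)$ with $v=x_j$ even and $\phi$ even. In case $(A)$ the elementary derivation $(y_i,\phi)$ annihilates each $f_j\in\Q[x_1,x_2]$ because it vanishes on $x_1,x_2$, so $d(y_i,\phi)=(y_i,d\phi)$, and cocycles (respectively coboundaries) correspond bijectively to cocycles (respectively coboundaries) $\phi\in M(X)$ of odd degree strictly less than $|y_i|$. Since $|y_1|\le|y_2|\le|y_3|$, case $(A)$ contributes a non-trivial even homology class in $DerM(X)$ exactly when there exists an odd cohomology class $w\in H^*(X;\Q)$ with $|w|<|y_3|$; this is condition $(I)$.

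In case $(B)$, an elementary derivation $(x_j,\phi)$ with $\phi$ even typically fails to be a cocycle because $d(x_j,\phi)(y_i)=\pm(\partial f_i/\partial x_j)\,\phi$. To kill these error terms one successively adds compensating elementary derivations $(y_i,g_i)$; the $i$th step is solvable exactly when $(\partial f_i/\partial x_j)\,\phi$ lies in the ideal $(f_1,\ldots,f_{i-1})$, which allows a choice of $g_i$ with $dg_i$ cancelling the obstruction modulo the previously chosen corrections. A degree count using $|x_1|\le|x_2|$ and purity rules out sources on $x_1$ and compositions containing $y_k$-factors in $\phi$ as either accounted for in case $(A)$, already exact, or of the wrong degree parity; the only surviving possibility is $j=2$ with $|x_1|<|x_2|$ and $\phi=x_1^k$ for $0<k<|x_2|/|x_1|$. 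Non-exactness of the resulting cocycle, together with unavailability of further cancelling boundaries, forces $f_1=x_1^l$ with $l\ge 2$ and $k<l$, which is precisely hypothesis $(II)$.

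The main obstacle is the converse direction within case $(B)$: showing that every even-degree cocycle derivation supported on an even generator is, modulo boundaries and case-$(A)$ contributions, cohomologous to the explicit one produced from $(II)$. I would filter $DerM(X)$ by the $y$-weight of $\phi$ and run the associated spectral sequence, using multiplication by $f_1,f_2,f_3$ as a Koszul-type boundary operator to collapse higher weights onto weight zero. The precise form of the ideal-membership chain $(*)$, in particular the requirement that $f_1$ be a pure power of $x_1$ rather than merely divisible by one, is the most delicate point, and it should emerge from the combined minimality of the derivation's degree and the absence of other available corrections.
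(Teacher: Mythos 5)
Your proposal is essentially the paper's own argument: both reduce, via $\pi_{*+1}(Baut_1X)_{\Q}\cong H_*(DerM(X))$, to deciding when the even-degree derivation homology vanishes, split a potential even class by the parity of the source generator into your cases $(A)$ and $(B)$ (the paper's (I) and (II)), identify $(A)$ with the existence of an odd cohomology class of degree less than $|y_3|$, and identify $(B)$ with solvability of the correction chain whose obstructions are exactly the ideal memberships in $(*)$ (your $i=1$ step already forcing $f_1=x_1^l$, with $k<l$ coming from non-exactness). The paper closes case $(B)$ not by your proposed $y$-weight spectral sequence but by the direct degree observation that the only admissible corrections in the $y_1$-, $y_2$-, $y_3$-slots are $0$, $gy_1$ and $h_1y_1+h_2y_2$, i.e.\ the same degree count you invoke informally, so no further machinery is needed.
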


In the following examples, (1) and (2) correspond to (I) and (II) of the above theorem, respectively.
Futhermore (3) satisfies the both cases of (I) and (II).
In all cases,  $Baut_1X$ are  not formal.

\begin{exmp}\label{su}

(1) When $X$ is the total space of a fibration 
$S^3\to X\to S^2\times \C P^3$ such that
$M(X)=(\Lambda (x_1,x_2,y_1,y_2,y_3),d)$
with 
 $|x_1|=|x_2|=2$, $|y_1|=|y_2|=3$, $|y_3|=7$, $dy_1=x_1^2$,   $dy_2=x_1x_2$,   $dy_3=x_2^4$.
There is the  even degree  non-zero element  $\tau =[(y_3,x_2y_1-x_1y_2)]\in H_2(DerM(X))$.
Then  we have 
$$M(Baut_1X)\cong (\Lambda (v_2,v_2',v_2'', v_3,v_4,v_4',v_4'', v_6,v_6', v_8),d)$$
with $|v_i|=|v_i'|=|v_i''|=i$, 
$dv_2=dv_2'=dv_2''=dv_3=dv_4=dv_4'=dv_8=0$,
$dv_4''=v_2v_3$,
$dv_6=v_3v_4$ and 
$dv_6'=v_3v_4'$.
Here 
the element $v_3$ corresponds to $\sigma$ (with respect to Theorem \ref{dermodel} below).

(2) When $X$ is the homogeneous space $SU(6)/SU(3)\times SU(3)$,
$M(X)=(\Lambda (x_1,x_2,y_1,y_2,y_3),d)$ with 
$|x_1|=4$, $|x_2|=6$, $|y_1|=7$, $|y_2|=9$, $|y_3|=11$, 
$dx_1=dx_2=0$, $dy_1=x_1^2$, $dy_2=x_1x_2$, $dy_3=x_2^2$ \cite{GHV}(\cite{A}).
There is the even degree   non-zero element  $\sigma=[(x_2,x_1)+(y_2,y_1)+2(y_3,y_2)]\in H_2(DerM(X))$.
Then we have 
$$M(Baut_1X)\cong (\Lambda (v_2,v_3,v_4,v_6,v_8,v_8', v_{10},v_{12}),d)$$
with $|v_i|=|v_i'|=i$, 
$dv_2=dv_3=dv_8=0$,
$dv_4=v_2v_3$,
$dv_6=v_3v_4$,
$dv_8'=v_3v_6$,
$dv_{10}=v_3v_8$ and 
$dv_{12}=v_{3}v_{10}$. Here 
the element $v_3$ corresponds to $\sigma$.

(3) When $X$ is the total space of a fibration 
$S^5\to X\to S^2\times \K P^2$ such that
$M(X)=(\Lambda (x_1,x_2,y_1,y_2,y_3),d)$
with 
 $|x_1|=2$, $|x_2|=4$, $|y_1|=3$, $|y_2|=5$, $|y_3|=11$, $dy_1=x_1^2$,   $dy_2=x_1x_2$,   $dy_3=x_2^3$.
There are the even degree  non-zero elements  
$\sigma=[(x_2,x_1)+(y_2,y_1)+3(y_3,x_2y_2)]\in H_2(DerM(X))$ and $\tau=[(y_3,x_2y_1-x_1y_2)]\in H_4(DerM(X))$.
Then  we have 
$$M(Baut_1X)\cong (\Lambda (v_2, v_3,v_4,v_4',v_5, v_6, v_8,v_{10},v_{12}),d)$$
with $|v_i|=|v_i'|=i$, 
$dv_2=dv_3=dv_4=dv_5=0$,
$dv_4'=v_2v_3$,
$dv_6=v_3v_4$, 
$dv_8=v_2v_3v'_4+v_4v_5+v_3v_6$,   
$dv_{10}=v_3v_8+v_5v_6$ and
$dv_{12}=v_3v'_4v_6$.
Here 
the elements $v_3$ and $v_5$ correspond to $\sigma$ and $\tau$, respectively.
It is not even coformal.



\end{exmp}


 Finally we give an example of a non-formal  homogeneous space $X$ that does not satisfy 
the conditions of Theorem \ref{cor}.

\begin{exmp}
\label{parent} 
Let $X$ be a homogeneous space  space with   $X=SU(2)\times SU(2)\times SU(2)/T^2$
of $T^2=S^1\times S^1$.
Due to \cite{Pa},
there are two types of  torus embeddings $T^2\subset SU(2)\times SU(2)\times SU(2)$
such that
$M(X)=(\Lambda (x_1,x_2,y_1,y_2,y_3),d)$
with $|x_1|=|x_2|=2$, $|y_1|=|y_2|=|y_3|=3$ and\\
(i) $dx_1=dx_2=0$, $dy_1=x_1^2$, $dy_2=x_2^2$,  $dy_3=0$ (formal)\\ 
(ii) $dx_1=dx_2=0$, $dy_1=x_1^2$, $dy_2=x_2^2$,  $dy_3=x_1x_2$ (non-formal).\\
In both cases, 
$$H^*(Baut_1X;\Q )\cong \Q [u_1,u_2,u_3,u_4,u_5,u_6,u_7]$$
with $|u_1|=|u_2|=|u_3|=|u_4|=2$ and $|u_5|=|u_6| =|u_7|=4$
since 
$\pi_{*+1}(Baut_1X)_{\Q}\cong H_*(DerM(X))\cong\Q \{ (y_1,x_2),(y_2,x_k),(y_3,x_1),(y_3,x_2),(y_1,1),(y_2,1),(y_3,1)\}$
 in which $k=1$ for (i) and  $k=2$ for (ii)
from Theorem \ref{dermodel}.
\end{exmp}

In \S 2, we prepare about computation of derivations  and give the proofs of Theorems  \ref{two} and \ref{cor}.
In \S 3, we compare $M(Baut_1X)$
for  formal cases and non-formal cases on elliptic spaces $X$ with rational homotopy groups of rank 3.
In \S 4, we give the Sullivan minimal models on all cases of Example \ref{ex1}.
In \S 5, we give the explicit computation for giving  the minimal model of Example \ref{su} (2)
but  omit the computations of Example \ref{su} (1) and (3) since they are  similar to it.


\section{Sullivan models and derivations}

We use the \textit{Sullivan minimal model} $M(X)$ 
 of a simply connected  space $X$ of finite type.
It is a free $\Q$-commutative differential graded algebra ({abbr.,} DGA) 
 $(\Lambda{V},d)$
 with a $\Q$-graded vector space $V=\bigoplus_{i> 1}V^i$
 where $\dim V^i<\infty$ and a decomposable differential,
 i.e., 
 $$\text{$d(V^i) \subset (\Lambda^+{V} \cdot \Lambda^+{V})^{i+1}$
 and\  $d \circ d=0.$}$$
Here  $\Lambda^+{V}$ is 
 the ideal of $\Lambda{V}$ generated by elements of positive degree.
 We often denote  $(\Lambda{V},d)$ simply 
 by  $\Lambda V$. 
{The degree of an element $x$ of a graded algebra is denoted by  $|{x}|$. }
Then  we have
$$\text{$xy=(-1)^{|{x}||{y}|}yx$ \ and\  $d(xy)=d(x)y+(-1)^{|{x}|}x \, d(y)$. }$$
We note that $M(X)$ determines the rational homotopy type $X_{\Q}$ of $X$.
In particular, there are  the following isomorphisms 
$$Hom(V^i,\Q )\cong \pi_i(X)\otimes {\Q} (=\pi_i(X)_{\Q})\mbox{  and  } H^*(\Lambda V,d)\cong H^*(X;\Q )$$
See \cite{FHT} for a general introduction and the standard notations.

Let $Der_i M(X)$ be the set of $\Q$-derivations of $M(X)$
decreasing the degree by $i$
with
$\sigma (xy)=\sigma (x)y+(-1)^{i\cdot |x|}x\sigma (y)$
for $x,y\in M(X)$. 
The boundary operator $\partial : Der_i M(X)\to Der_{i-1} M(X)$
is defined by $\partial  (\sigma)=d\circ \sigma-(-1)^i\sigma \circ d$
for $\sigma\in Der_iM(X)$.  
We denote  $\oplus_{i>0} Der_iM(X)$ by
$DerM(X)$
in which $Der_1M(X)$ is $\partial$-cycles.
Then ${Der}M(X)$ is a DGL by the Lie bracket $[\sigma ,\tau]:=\sigma\circ \tau-(-1)^{|\sigma||\tau|}
\tau\circ\sigma$.
 Furthermore, recall the definition (sign convention) of D.Tannr\'{e} \cite[p.25]{T}: 
$C^*(L,\partial )=(\Lambda s^{-1}\sharp L, D=d_1+d_2) $ with

$(i)\ \ \ \  \langle d_1s^{-1}z; sx\rangle =-\langle z;\partial x\rangle$ and

$(ii)\ \ \ \  \langle d_2s^{-1}z; sx_1,sx_2\rangle =(-1)^{|x_1|}\langle z;[x_1,x_2]\rangle$,\\
where $\langle s^{-1}z;sx\rangle=(-1)^{|z|}\langle z;x\rangle$ for a DGL $(L,\partial )$
and $\sharp L$ is the dual space of $L$.

\begin{thm}\label{dermodel}(\cite[p.314]{S}, \cite{FLS})
The DGA $C^*(DerM(X),\partial )$ is the Sullivan model $m(Baut_1X)$ of $Baut_1X$.
 In particular,  
$\pi_{*}(\Omega B{aut}_1 X)\otimes \Q\cong H_*({Der} M(X),\partial )$
as graded Lie algebras,
in which the left hand has the Samelson bracket.
\end{thm}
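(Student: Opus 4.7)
The plan is to establish the theorem in two stages: first, identify $DerM(X)$ as a Quillen DGL model of $Baut_1X$; second, apply the Chevalley--Eilenberg--Tanr\'e cochain construction $C^*(-,-)$ recalled just above to pass from that DGL model to a Sullivan CDGA model $m(Baut_1X)$.

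For the first stage, I would begin with the loop space identification $\Omega Baut_1X\simeq aut_1X$ together with the Sullivan--Haefliger model for function spaces: given the minimal model $M(X)=(\Lambda V,d)$, the connected component of the mapping space $\mathrm{map}(X,X)$ containing $id_X$ has rationally the derivations of $(\Lambda V,d)$ as its tangent DGL, and the inclusion $aut_1X\hookrightarrow \mathrm{map}(X,X)_{id_X}$ is a rational equivalence onto this component. The boundary $\partial\sigma=d\circ\sigma-(-1)^{|\sigma|}\sigma\circ d$ from the excerpt is precisely the induced tangent differential, and the graded commutator of derivations realizes the Samelson bracket on $\pi_*$. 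This already yields the ``in particular'' clause $\pi_*(\Omega Baut_1X)\otimes\Q\cong H_*(DerM(X),\partial)$ as graded Lie algebras.

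For the second stage, because $DerM(X)$ is now established as a DGL model of $\Omega Baut_1X$, Quillen's adjunction realizes $Baut_1X$ rationally as the spatial realization $||DerM(X)||$, and its Sullivan model is computed by Tanr\'e's cochain functor $C^*$. The sign conventions (i) and (ii) recalled in the excerpt then unpack as: the linear part $d_1$ is dual to $\partial$ on $DerM(X)$, while the quadratic part $d_2$ dualizes the bracket, so $m(Baut_1X)=(\Lambda s^{-1}\sharp DerM(X),d_1+d_2)$ exactly as stated.

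The main obstacle I expect is bookkeeping rather than a conceptual one: tracking the degree shifts (derivations decrease degrees, the Samelson bracket lives on $\pi_*(\Omega Baut_1X)$, and $C^*$ applies $s^{-1}\sharp$) and checking that Tanr\'e's signs produce a graded Lie algebra isomorphism rather than just a graded vector space one. The non-trivial input beyond bookkeeping is Sullivan's mapping space theorem together with the observation that restricting from all derivations to $DerM(X)=\oplus_{i>0}Der_iM(X)$ is legitimate because $aut_1X$ is the identity component, so degree zero (and any formally negative) derivations do not contribute to $\pi_*(aut_1X)\otimes\Q$.
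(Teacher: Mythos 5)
The paper gives no proof of this theorem: it is quoted as background from Sullivan \cite[p.314]{S} and F\'elix--Lupton--Smith \cite{FLS}, and your two-stage outline (identify $DerM(X)$, with the commutator bracket, as a DGL model of $aut_1X\simeq\Omega Baut_1X$ via the derivation model of the mapping-space component of $id_X$, then apply Tanr\'e's $C^*$ to get the Sullivan model of the spatial realization) is exactly the standard argument of those references, so you are on the same track as the sources the paper relies on. One small point to sharpen: the passage to $DerM(X)=\oplus_{i>0}Der_iM(X)$ is not justified merely by saying degree-zero derivations do not contribute because $aut_1X$ is the identity component; one must truncate so that the degree-one part consists of $\partial$-cycles (as the paper's definition of $DerM(X)$ stipulates), since otherwise $H_1$ of the truncated complex would be $Der_1/\mathrm{im}\,\partial$ rather than $\ker\partial/\mathrm{im}\,\partial$ and would not compute $\pi_2(Baut_1X)\otimes\Q$ correctly.
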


Here the DGA $m(Baut_1X)$ need  not be minimal (cf. Proposition  \ref{nonm} below).
The following lemma is obvious from DGA-arguments.

\begin{lem}\label{retr}
Suppose that  a map $f:S\to T$ is $\pi_*(f)_{\Q}$-injective. 
Then the model is given as  $M(f):M(T)=(\Lambda (U\oplus V),D)\to (\Lambda V,d)=M(S)$
with 
$$ M(f)\mid_V=id_V,\ M(f)(U)=0,\ d=\overline{D},\ DU\in \Lambda^+U\otimes \Lambda V,\   
DV\in \Lambda U\otimes \Lambda^+ V.$$
Then  $S$ is a rational factor of $T$ if and only if $DU\subset \Lambda U$ and $D\mid_V=d$.
Futhermore\\
(1) $S$ has a rational retraction for $T$  if and only if 
$DV\subset \Lambda V$ (i.e., $D\mid_V=d$).\\
(2)  $S$ has a weak  retraction for $T$ if and only if 
$DV\subset \Lambda V\oplus\Lambda^{>2}( U\oplus V)$.

\end{lem}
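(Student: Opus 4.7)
The plan is to translate each of the three spatial conditions (rational factor, rational retraction, weak retraction) into purely algebraic conditions on the relative Sullivan model of $f$, then verify the equivalences by standard DGA arguments. Since $\pi_*(f)_{\Q}$ is injective, the induced map on indecomposables $Q(M(f))\colon Q(M(T)) \twoheadrightarrow V = Q(M(S))$ is surjective; choosing a graded splitting gives a vector-space decomposition $Q(M(T)) = U \oplus V$ with $U = \ker Q(M(f))$. Lifting this splitting to the minimal Sullivan model, using $D^2 = 0$ and minimality together with the requirement that $M(f)$ be a DGA projection sending $u \mapsto 0$, $v \mapsto v$, forces the stated shape of $D$; in particular $M(f)(Dv) = dv$ says that the $U$-free component of $Dv$ equals $dv$, so $D|_V = d$ is equivalent to $DV \subset \Lambda V$.

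For the rational factor clause, the Sullivan--de Rham equivalence makes $T_{\Q} \simeq S_{\Q} \times C$ (compatibly with $f$) equivalent to a DGA isomorphism $M(T) \cong M(S) \otimes M(C)$ under which $M(f)$ is the projection. Under the generator decomposition this means $D$ respects the tensor decomposition, i.e.\ $DU \subset \Lambda U$ (with $(\Lambda U, D|_U) = M(C)$) and $DV \subset \Lambda V$. For (1), a rational retraction yields a DGA map $\phi\colon M(S) \to M(T)$ with $M(f) \circ \phi \simeq \mathrm{id}_{M(S)}$. If $DV \subset \Lambda V$, take $\phi$ the canonical inclusion; conversely, the Sullivan lifting lemma lets us straighten any such $\phi$ so that $\phi(v) = v$ on generators, after which the DGA condition $D\phi(v) = \phi(dv) = dv$ forces $Dv = dv$ for all $v \in V$.

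For (2), I invoke the classical Quillen--Sullivan duality: if one decomposes a minimal Sullivan differential by word length as $D = D_2 + D_3 + \cdots$, where $D_k$ sends generators into $\Lambda^k$, then the quadratic part $D_2\colon W \to \Lambda^2 W$ is the $\Q$-linear dual of the Lie bracket on $\pi_*(\Omega X)_{\Q}$, under the standard identification $W = (s \pi_*(\Omega X)_{\Q})^{\sharp}$. A Lie algebra retraction $R$ of $\pi_*(\Omega f)_{\Q}$ is therefore dual to a vector-space splitting $V \hookrightarrow U \oplus V$ intertwining the bracket, which amounts to $D_2 V \subset \Lambda^2 V$; combined with the inclusion $DV \subset \Lambda U \otimes \Lambda^+ V$ from the setup, this is exactly $DV \subset \Lambda V \oplus \Lambda^{>2}(U \oplus V)$.

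The most delicate step is the straightening argument in (1): one modifies $\phi$ iteratively on generators of increasing degree using minimality, and the obstruction to achieving $\phi(v) = v$ on the nose lives in the relative cohomology and vanishes precisely under $DV \subset \Lambda V$. Everything else reduces to bookkeeping once the quadratic-part/bracket correspondence underlying (2) is granted.
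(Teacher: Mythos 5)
The paper offers no argument for this lemma (it is declared ``obvious from DGA-arguments''), so your write-up has to stand on its own, and its weak point is every ``only if'' direction, concentrated in your straightening step for (1). Given a DGA map $\varphi\colon M(S)\to M(T)$ with $M(f)\circ\varphi\simeq \mathrm{id}$, you claim $\varphi$ can be modified so that $\varphi(v)=v$ for all generators $v\in V$, whence $Dv=dv$; but your own closing sentence concedes that the obstruction to this straightening ``vanishes precisely under $DV\subset\Lambda V$'', which is exactly the conclusion being proved, so the argument is circular --- and in fact the straightening is impossible in general. Concretely, let $M(S)=(\Lambda(a,y),\ dy=a^3)$ with $|a|=2$, $|y|=5$, and $M(T)=(\Lambda(a,y,u,u'),D)$ with $|u|=2$, $|u'|=5$, $Da=Du=0$, $Du'=u^3$, $Dy=(a+u)^3-u^3$, and $M(f)$ the projection killing $u,u'$. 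All the normal-form conditions in the lemma hold for the splitting $U=\{u,u'\}$, $V=\{a,y\}$, and $\varphi(a)=a+u$, $\varphi(y)=y+u'$ is a strict DGA section of $M(f)$, so a rational retraction exists; yet a short computation shows every section must send $a\mapsto a+u$, so no section fixes the generators, and $DV\not\subset\Lambda V$ for this splitting. The same issue (fixed splitting versus chosen splitting) is glossed over in your factor clause and in the ``only if'' half of (2).

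The correct route is to change the splitting rather than the section, and this is also the reading under which the lemma is true: since $M(f)\circ\varphi$ is a quasi-isomorphism of minimal Sullivan algebras homotopic to the identity, it is an automorphism, so replacing $\varphi$ by $\varphi\circ(M(f)\circ\varphi)^{-1}$ you may assume $M(f)\circ\varphi=\mathrm{id}$ strictly; then $\widetilde V:=\varphi(V)$ is a new generating subspace complementary to $U$, $M(f)$ restricted to $\Lambda\widetilde V$ is the canonical isomorphism onto $\Lambda V$, and $D\widetilde V=\varphi(dV)\subset\Lambda\widetilde V$ (in the example above $\widetilde V=\{a+u,\ y+u'\}$). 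Likewise in (2) the Lie-algebra retraction $R$ dualizes to a linear map $\tau\colon V\to U$, and it is the graph $\{v+\tau(v)\}$, not $V$ itself, whose quadratic differential lands in its own $\Lambda^2$; and in the factor clause the generator decomposition must be re-chosen to match the product decomposition $M(S)\otimes M(C)$. So the equivalences should be stated and proved ``for a suitable choice of $U$ and $V$ inside $M(T)$'', which your argument does not deliver. A smaller inaccuracy: the shape $DV\subset\Lambda U\otimes\Lambda^{+}V$ is not forced by $\pi_{\Q}$-injectivity (for the Hopf map $S^3\to S^2$ one gets $Dv_3=u_2^2$); what is forced, and what you actually use, is that $DU$ lies in the ideal generated by $U$ and that the $\Lambda V$-component of $Dv$ equals $dv$.
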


\begin{prop}\label{abc}Let $M(X\times S^n)=(\Lambda (V\oplus u),d)$ where $M(X)=(\Lambda V,d)$ and  $|u|=n$ for an odd-integer $n$.
Let $A=\oplus_{i>0} A_i$, $B=\oplus_{i>0} B_i$ and  $C=\oplus_{i>0} C_i$ where 
$$H_i(Der(\Lambda (V\oplus u))=A_i\oplus B_i\oplus C_i$$
$$:=H_i(Der(\Lambda V))\oplus  H_i(Der( \Lambda V,(u)))\oplus  H_i(Der(\Lambda  ( u),\Lambda V)),$$
where $(u)$ is the ideal of $\Lambda (V\oplus u)$ generated by $u$.
Then
\\
(1)  $Baut_1X$ has a weak retraction
for $Baut_1(X\times S^n)$ if  $[B,C]=0$.\\
(2) $Baut_1S^n$ has a weak retraction
for $Baut_1(X\times S^n)$ if  $[A,C]=0$.\\ 
(3) $\pi_*(\Omega Baut_1(X\times S^n))_{\Q}$ is abelian
(i.e., the Lie bracket of $H_*(Der(\Lambda (V\oplus u))$ is trivial ) if  
$[A,A]=0$, $[A,C]=0$ and  $[B,C]=0$.
\end{prop}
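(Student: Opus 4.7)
The plan is to translate the three claims into purely Lie-algebraic statements about splittings in $H_*(Der\Lambda(V\oplus u))$, using Theorem \ref{dermodel}, which identifies $\pi_*(\Omega Baut_1 Z)_{\Q}$ with $H_*(DerM(Z))$ as graded Lie algebras. Under this identification, $\pi_*(\Omega Bj_X)_{\Q}$ becomes the inclusion $A = H_*(Der\Lambda V)\hookrightarrow H_*(Der\Lambda(V\oplus u))$ obtained by extending a derivation by zero on $u$, and the image of $\pi_*(\Omega Bj_{S^n})_{\Q}=H_*(Der\Lambda u)=\Q\{(u,1)\}$ sits inside the $C$-summand. So (1) asks for a graded Lie algebra retraction $A\oplus B\oplus C\twoheadrightarrow A$, (2) for one onto $\Q\{(u,1)\}\subset C$, and (3) for the bracket on the whole $H_*(Der\Lambda(V\oplus u))$ to vanish.

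First I would justify the decomposition $Der\Lambda(V\oplus u)=A\oplus B\oplus C$ at the chain level. A derivation is determined by its values on $V$ and $u$, and since $|u|=n$ is odd we have $u^2=0$, so every element of $\Lambda(V\oplus u)$ splits uniquely as $p+uq$ with $p,q\in\Lambda V$. This gives four potential types, and the one missing from $A\oplus B\oplus C$---derivations $V\mapsto 0$, $u\mapsto uq$---would have degree $-|q|\leq 0$ and hence does not occur in the positive-degree subcomplex. Each of $A, B, C$ is visibly $\partial$-stable, so the decomposition descends to cohomology.

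The core computation is the bracket structure. A direct calculation, using $u^2=0$ and the fact that any $C$-derivation annihilates $\Lambda V$, gives
\[
[A,A]\subset A,\quad [A,B]\subset B,\quad [A,C]\subset C,\quad [B,B]=0=[C,C],\quad [B,C]\subset A,
\]
where the last inclusion uses the same degree argument as above to force the $u$-component of any $[B,C]$-bracket to vanish. Moreover, the rule $\Phi\colon B\xrightarrow{\sim} A$ sending $\tau$ (with $\tau(v)=uq_v$) to the derivation $\Phi\tau\colon v\mapsto q_v$ is an isomorphism of chain complexes up to an $n$-shift, and it intertwines the $A$-action in the sense $\Phi([\sigma,\tau])=\pm[\sigma,\Phi\tau]$ for $\sigma\in A$, $\tau\in B$. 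Consequently $[A,A]=0$ in cohomology automatically forces $[A,B]=0$.

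With these ingredients the three parts follow quickly. For (1), under $[B,C]=0$ the projection $R\colon A\oplus B\oplus C\to A$ preserves brackets, since among all cross-type brackets only $[B,C]$ can land in $A$, and it vanishes by hypothesis. For (2), under $[A,C]=0$ no bracket lands in the $C$-summand at all, so the composite $R'\colon A\oplus B\oplus C\twoheadrightarrow C\twoheadrightarrow \Q\{(u,1)\}$ kills every bracket and is trivially a Lie map, retracting the inclusion. For (3), $[A,A]=0$ combined with $\Phi$ supplies $[A,B]=0$, the hypotheses give $[A,C]=[B,C]=0$, and $[B,B]=[C,C]=0$ are automatic, so every bracket vanishes. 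The main subtlety is the intertwining property of $\Phi$: it is precisely the reason why $[A,B]=0$ need not be imposed as a separate hypothesis of (3) but follows from the \emph{weaker} condition $[A,A]=0$.
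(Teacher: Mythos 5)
Your argument is correct, and its core is the same as the paper's: the same three-block decomposition $A\oplus B\oplus C$ of $Der(\Lambda(V\oplus u))$ and essentially the same bracket table, combined with Theorem \ref{dermodel}. The difference is in how the conclusion is drawn: the paper routes (1) and (2) through Lemma \ref{retr}(2), i.e.\ through the quadratic part of the differential of the model $C^*(Der M(X\times S^n))$, whereas you work entirely on the Lie-algebra side and build the graded Lie algebra retractions $R\colon A\oplus B\oplus C\to A$ and $R'\colon A\oplus B\oplus C\to \Q\{(u,1)\}$ directly from the definition of weak retraction; since that definition is already a statement about $\pi_*(\Omega(-))_{\Q}$, your route is a little more direct (it avoids the dualization step), and both routes share the same unproved but standard identification of $\pi_*(\Omega Bj_X)_{\Q}$ and $\pi_*(\Omega Bj_{S^n})_{\Q}$ with the evident inclusions of derivation complexes. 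You are also more precise on two points the paper leaves terse: you observe that $[B,B]=0$ already at the chain level (the paper only records $[B,B]\subset B$), and, for (3), you justify the paper's bare assertion that $[A,A]=0$ forces $[A,B]=0$ by exhibiting the ``divide by $u$'' isomorphism $\Phi\colon B\xrightarrow{\ \sim\ }Der_{>n}(\Lambda V)$, a sign-commuting chain map intertwining the $A$-action; checking that $\tau=L_u\circ\Phi\tau$ on $\Lambda V$ (so that $\Phi[\sigma,\tau]=\pm[\sigma,\Phi\tau]$ and $\Phi\partial=-\partial\Phi$) makes that step airtight, and the degree argument you use for the vanishing of the $u$-value of $[B,C]$-brackets (any element of $u\Lambda V$ of degree below $n$ is zero) is exactly what makes $[B,C]\subset A$ and what excludes the fourth type of derivation from positive degrees. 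In short: correct, same decomposition and bracket computation as the paper, with a more self-contained treatment of the retractions and of the implication $[A,A]=0\Rightarrow[A,B]=0$.
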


\begin{proof} In general, since they are negative degree derivations, we have
$$[A,A]=A, \ [A,B]=B,\ [A,C]=C, \ [B,B]=B,\  [B,C]=A,\  [C,C]=0.$$
From Theorem \ref{dermodel} and Lemma \ref{retr}, we have (1),(2) and (3). 
In particular, if $[A,A]=0$, we have $[A,B]=[B,B]=0$.
Thus  (3) holds.
\end{proof}

\begin{claim}\label{ABC}
(1) $\mu_{m,k}(X)=0$
 for any $0\leq k<n<m$
if and only if $[B,C]=0$.

(2) $\psi_m(X)=0$ 
for any $0<m<n$ if and only if $[A,C]=0$.
\end{claim}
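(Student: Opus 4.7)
The plan is to make the direct-sum splitting of $H_{\ast}(Der(\Lambda(V\oplus u)))$ concrete at the chain level, compute the two relevant brackets there, and match them with $\mu_{m,k}$ and $\psi_m$ under natural identifications. Since $|u|=n$ is odd and every positive-degree derivation strictly lowers degree, a quick degree count forces $\sigma(u)\in\Lambda V$ for every $\sigma$, and writing $\sigma|_V(v)=\sigma_1(v)+u\sigma_2(v)$ with $\sigma_i(v)\in\Lambda V$ produces a unique decomposition $\sigma=\alpha+\tau+\gamma$ into the chain-level summands $A'$ (with $\alpha(u)=0$, $\alpha(V)\subset\Lambda V$), $B'$ (with $\tau(u)=0$, $\tau(V)\subset u\Lambda V$), $C'$ (with $\gamma(V)=0$, $\gamma(u)\in\Lambda V$). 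Using $du=0$ and the Leibniz rule, one checks $\partial$ preserves each summand, so the decomposition descends to homology.

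I would next record two natural identifications. Setting $\tau(v)=u\,\phi_{\tau}(v)$ gives a linear bijection $B'_{i}\to Der_{i+n}(\Lambda V)$, $\tau\mapsto\phi_{\tau}$; a sign check using $n$ odd yields $\partial\tau=-u\cdot\partial\phi_{\tau}$, so this descends to $H_{i}(B)\cong H_{i+n}(DerM(X))$. Similarly, $\gamma\mapsto\gamma(u)$ identifies $(C'_{k},\partial)$ with $((\Lambda V)^{n-k},d)$, giving $H_{k}(C)\cong H^{n-k}(X;\Q)$. The key computation is the bracket: using $\alpha(u)=\tau(u)=0$ and $\gamma|_{V}=0$ in the graded commutator one obtains
\begin{equation*}
[\alpha,\gamma](V)=0,\quad [\alpha,\gamma](u)=\alpha(\gamma(u)),\quad [\tau,\gamma](u)=0,\quad [\tau,\gamma](v)=\pm\gamma(u)\cdot\phi_{\tau}(v),
\end{equation*}
so $[A,C]\subset C$ is ``$\alpha$ acts on the cohomology class $\gamma(u)$'' and $[B,C]\subset A$ is ``multiply the derivation $\phi_{\tau}$ by the class $\gamma(u)$''.

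Under the identifications above, the bracket in (1) becomes $[\phi_{\tau}]\otimes[g]\mapsto\pm[g\phi_{\tau}]=\pm\mu_{m,k'}([\phi_{\tau}]\otimes[g])$ with $m=i+n$ and $k'=n-k$; since $i\geq1$ and $k\geq1$ translate to $k'<n<m$, the equivalence $[B,C]=0\iff\mu_{m,k}(X)=0$ for $k<n<m$ follows, proving (1). In (2), the bracket $H_{m}(A)\otimes H_{k}(C)\to H_{m+k}(C)$ becomes $[\alpha]\otimes[g]\mapsto[\alpha(g)]=\psi_{m}([\alpha])(g)$; for $m\geq n$ the target $H_{m+k}(C)=H^{n-m-k}(X)$ vanishes, so the bracket is automatically zero, whereas for $0<m<n$ it records exactly the action of $\psi_{m}([\alpha])$ on cohomology classes $g\in H^{n-k}(X)$ with $1\leq k\leq n-1$, yielding the stated equivalence.

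The main obstacle I anticipate is sign bookkeeping: the factor $(-1)^{n}=-1$ appearing in $d(u\phi_{\tau}(v))=-u\cdot d\phi_{\tau}(v)$ and the Koszul signs in the graded commutator must be tracked carefully so that the chain-level identifications really pass to homology with the correct signs and the bracket formulas agree with $\mu_{m,k}$ and $\psi_m$ on the nose.
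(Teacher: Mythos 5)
Your proposal is correct and takes essentially the same route as the paper: the paper's proof consists exactly of the bracket identities $[\sigma\otimes u,(u,w)]=\pm\,\sigma\otimes w$ and $[\sigma,(u,w)]=\pm\,(u,\sigma(w))$ together with the remark that every element of $[B,C]$ and $[A,C]$ arises this way. Your chain-level splitting, the identifications $H_i(B)\cong H_{i+n}(DerM(X))$ and $H_k(C)\cong H^{n-k}(X;\Q)$, and the degree bookkeeping simply make those implicit identifications and sign checks explicit.
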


\begin{proof}
 (1)  For $\sigma \in H_m(DerM(X))$
and $w\in H^k(X;\Q )$,
$$\mu_{m,k}(\sigma \otimes w)= \sigma\otimes w=
\pm [\sigma \otimes u,(u,w)]\in [B,C]$$
under $0\leq k<n<m$.
Conversely, all elements of $[B,C]$ are given as them.

(2) For  $\sigma \in H_m(DerM(X))$
and  $w\in H^k(X;\Q )$, 
$$(u,\psi_m(\sigma )(w))= (u,\sigma (w))=\pm [\sigma, (u,w)]\in [A,C]$$
under  $0<m\leq k<n$. Conversely, all elements of $[A,C]$ are given as them.
\end{proof}

\noindent
{\it Proof of Proposition \ref{main}. }
(a) 
 It holds from Proposition  \ref{abc} (1) and Claim  \ref{ABC} (1). 

(b) 
It holds  from Proposition  \ref{abc} (2) and Claim \ref{ABC} (2).\hfill\qed\\

\noindent
{\it Proof of Theorem \ref{two}.}
It holds from Proposition \ref{abc} (3) and Claim \ref{ABC} (1)(2).
\hfill\qed\\







From Theorem \ref{dermodel}, $\pi_{*+1}(B{aut}_1 Y)\otimes \Q\cong H_*({Der} M(Y))$.
Thus we have immediately by degree arguments
\begin{lem}\label{freepure}$H_{even}(DerM(X))=0$ if and only if 
 $H^*(Baut_1X;\Q )$ is a polynomial algebra.
\end{lem}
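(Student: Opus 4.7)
The plan is to combine Theorem \ref{dermodel} with a parity argument on the Sullivan minimal model of $Baut_1X$. Writing $M(Baut_1X) = (\Lambda V, D)$ for this minimal model, the isomorphism $\pi_n(\Omega Baut_1X)_{\Q} \cong H_n(DerM(X))$ of Theorem \ref{dermodel}, combined with the standard duality $V^{n+1} \cong Hom(\pi_{n+1}(Baut_1X)_{\Q}, \Q)$ and the identification $\pi_{n+1}(Baut_1X)_{\Q} \cong \pi_n(\Omega Baut_1X)_{\Q}$, yields a parity-swapping duality $V^{k} \cong Hom(H_{k-1}(DerM(X)), \Q)$. In particular, the hypothesis $H_{even}(DerM(X)) = 0$ is equivalent to $V^{odd} = 0$.

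For the forward direction, suppose $V^{odd} = 0$. Every monomial of $\Lambda V^{even}$ has even total degree, but $D$ raises degree by one, so $D = 0$ and $H^*(Baut_1X;\Q) = \Lambda V^{even}$ is a polynomial algebra on the even-degree generators of $V$. Conversely, suppose $H^*(Baut_1X;\Q) = \Q[v_1, \ldots, v_m]$ is polynomial. Graded commutativity forces each $|v_i|$ to be even, since an odd-degree element in characteristic zero squares to zero. Then $(\Lambda(v_1, \ldots, v_m), 0)$ is itself a simply connected minimal Sullivan algebra with the correct cohomology; sending each $v_i$ to a chosen cocycle representative in $M(Baut_1X)$ produces a quasi-isomorphism into $M(Baut_1X)$, and uniqueness of minimal Sullivan models then gives $M(Baut_1X) \cong (\Lambda V^{even}, 0)$, whence $V^{odd} = 0$ and consequently $H_{even}(DerM(X)) = 0$.

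The argument is essentially bookkeeping built on Theorem \ref{dermodel} and presents no serious obstacle. The only points requiring modest care are tracking the off-by-one parity shift between $V$ and $H_*(DerM(X))$ correctly, and, in the converse, invoking graded commutativity together with the uniqueness of minimal Sullivan models to promote the cohomological hypothesis to a statement about the generating space $V$.
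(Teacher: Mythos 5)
Your proof is correct and follows the same route as the paper, which simply cites Theorem \ref{dermodel} ($\pi_{*+1}(Baut_1X)\otimes\Q\cong H_*(DerM(X))$) and says the lemma follows "immediately by degree arguments." Your write-up just makes that parity argument explicit: $H_{even}(DerM(X))=0$ iff the generating space of $M(Baut_1X)$ is concentrated in even degrees, which forces the differential to vanish, and conversely a polynomial cohomology algebra (necessarily on even generators, by graded commutativity) has $(\Lambda V^{even},0)$ as its minimal model.
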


\noindent{\it Proof of Theorem \ref{cor}}.   
        From Lemma \ref{freepure},
it is sufficient to observe  when a non-zero  even degree element $\sigma$ exists in $H_*(DerM(X))$.
Then we can divide it two cases as (I) :``$|v|$ and $|f|$ are both odd'' or (II) :``$|v|$ and $|f|$ are both even''
for $\sigma=(v,f)+\cdots$.

 A possiblity of  $H_{even}(DerM(X))\neq 0$ is  that 
there is the non-exact $\delta$-cycle $\sigma=(y_3, w )$ for some odd-degree non-exact cocycle  $w$ in 
 the ideal $(y_1,y_2)$,
which is equivalent to (I).

The other possibility is given, when $|x_1|<|x_2|$,
as that there exists a non-exact $\delta$-cycle $\sigma=(x_2,x_1^k)+\tau$ for some 
$0<k<|x_2|/|x_1|$ and a derivation $\tau$.
Let $dy_i=f_i\in \Q [x_1,x_2]$ for $i=1,2,3$.
There exists such a  $\tau$ 
if and only if there are  elements  $g,h_1,h_2\in \Q[x_1,x_2]$ such that
  $$\partial  (x_2,x_1^k)=-(y_1,\partial f_1/\partial x_2\cdot x_1^k)-(y_2,\partial f_2/\partial x_2\cdot x_1^k)
-(y_3,\partial f_3/\partial x_2\cdot x_1^k)
$$
$$=(y_2,gf_1)+(y_3,h_1f_1+h_2f_2)=\partial  ((y_2,gy_1)+(y_3,h_1y_1+h_2y_2))=\partial (-\tau).$$
It is equivalent to   $(*)$ of (II).
\hfill\qed\\

\begin{rem}
In Theorem \ref{cor}(I), if  $\psi_{|y_i|} (y_i,1) (w)=[\partial w/\partial y_i]\neq 0$,
there is the non-exact $\delta$-cycle $\sigma=(y_3,\partial w/\partial y_i )$ with $\sigma =[(y_i,1),(y_3,w)]$
for  non-exact $\delta$-cycles $(y_i,1)$ and $(y_3,w)$ ($i=1$ or $2$).
Then the rational homotpy Lie algebra $\pi_*(\Omega Baut_1X)_{\Q}$ has  a non-trivial Lie bracket
(is not abelian) 
from Theoem \ref{dermodel}.
Especially $H^*(Baut_1X;{\Q})$
is not even free.
\end{rem}

\section{Oddly generated models of rank 3}
For elliptic spaces $X$ with 
$\dim\pi_*(X)_{\Q}= 3$ and $\pi_*(X)_{\Q}=\pi_{odd}(X)_{\Q}$, 
let $M(X)=(\Lambda (v_1,v_2,v_3),d)$  with $|v_i|$ odd for $i=1,2,3$
of $|v_1|\leq |v_2|\leq |v_3|$.
Then the minimal model $M:=M(Baut_1X)$ is given as follows:\\

 \noindent
 (I)\ \textit{ Formal case:  $d(v_1)=d(v_2)=d(v_3)=0$}.\\

(1.1) $|v_1|=|v_2|=|v_3|$
 $$M\cong (\Lambda (v_{1,0},v_{2,0},v_{3,0}),0)$$
     
(1.2) $|v_1|=|v_2|<|v_3|\leq 2|v_1|$
 $$M\cong (\Lambda (v_{1,0},v_{2,0},v_{3,0},v_{3,1},v_{3,2}),D)$$
  with 
  $D(v_{1,0})=D(v_{2,0})=D(v_{3,1})=D(v_{3,2})=0$
   and $D(v_{3,0})=v_{1,0}v_{3,1}+v_{2,0}v_{3,2}$.\\
  
(1.3) $|v_1|=|v_2|<\frac{1}{2}|v_3|$
 $$M\cong (\Lambda (v_{1,0},v_{2,0},v_{3,0},v_{3,1},v_{3,2},v_{3,12}),D)$$
  with   $D(v_{1,0})=D(v_{2,0})=D(v_{3,12})=0$,
  $D(v_{3,1})=v_{1,0}v_{3,12}$,
  $D(v_{3,2})=v_{2,0}v_{3,12}$ 
  and 
  $D(v_{3,0})=v_{1,0}v_{3,1}+v_{2,0}v_{3,2}$.\\

(1.4) $|v_1|<|v_2|<|v_3|\leq |v_1|+|v_2|$
 $$M\cong (\Lambda 
  (v_{1,0},v_{2,0},v_{2,1},v_{3,0},v_{3,1},v_{3,2}),D)$$
with  
  $D(v_{1,0})=D(v_{2,1})=0$,
$D(v_{2,0})=v_{1,0}v_{2,1}$,
  $D(v_{3,1})=v_{2,1}v_{3,2}$
  and 
  $D(v_{3,0})=v_{1,0}v_{3,1}+v_{2,0}v_{3,2}$.\\

(1.5) $|v_1|<|v_2|$ and $|v_1|+|v_2|<|v_3|$
 $$M\cong (\Lambda (v_{1,0},v_{2,0},v_{2,1},v_{3,0},v_{3,1},v_{3,2},v_{3,12}),D)$$
  with 
  $D(v_{1,0})=D(v_{2,1})=D(v_{3,2})=D(v_{3,12})=0$,
  $D(v_{2,0})=v_{1,0}v_{2,1}$,
  $D(v_{3,1})=v_{2,1}v_{3,2}+v_{3,12}v_{2,0}$
   and $D(v_{3,0})=v_{1,0}v_{3,1}+v_{2,0}v_{3,2}$.\\

(1.6) $|v_1|<|v_2|=|v_3|$
 $$M\cong (\Lambda (v_{1,0},v_{2,0},v_{2,1},v_{3,0},v_{3,1}),D)$$
  with 
  $D(v_{1,0})=D(v_{2,1})=D(v_{3,1})=0$,
  $D(v_{2,0})=v_{1,0}v_{2,1}$
     and $D(v_{3,0})=v_{1,0}v_{3,1}$.\\
    
 \noindent
(II)\ \textit{Non-formal case:
$d(v_1)=d(v_2)=0$ and $d(v_3)=v_1v_2$}.\\
 
 (2.1) 
 $|v_1|=|v_2|$
  $$M\cong (\Lambda (v_{3,0}),0).$$
  
(2.2) $|v_1|<|v_2|$
 $$M\cong (\Lambda (v_{2,1},v_{3,0}),0).$$

Here the elements 
$v_{i,j}$,   $v_{i,jk}$ and   $v_{i,0}$
corresponds to the derivations
$(v_i,v_j)$, $(v_i,v_jv_k)$ and   $(v_i,1)=v_i^*$,
respectively 
with respect to Theorem \ref{dermodel}.  
Thus $|v_{i,j}|=|v_i|-|v_j|+1$
is odd and  $|v_{i,jk}|=|v_i|-|v_j|-|v_k|+1$,  $|v_{i,0}|=|v_i|+1$ are even.
We also adopt these symbols to the following  sections again.

 \section{Example \ref{ex1}}

Let's directly  compute the rational homotopies  of $Baut_1(X\times S^n)$ 
when $M(X)=(\Lambda (v_1,v_2,v_3),d)$ with $d(v_1)=d(v_2)=0$ and $d(v_3)=v_1v_2$ (i.e.,  (II) of \S 3)
 and $H^*(S^n;\Q )=\Lambda (u)$ with $|u|=n$ odd.
The following cases (1) $\sim$ (6) exist:\\

(1) $|u|\leq |v_1|$\\

\ \ \ (i)  $|u|+|v_1|<|v_2|$
 $$M(Baut_1(X\times S^n) )\cong (\Lambda (v_{u,0},v_{2,1},v_{2,u1}, v_{3,0},v_{3,u}),d)$$
with $d(v_{3,0})=v_{u,0}v_{3,u}$ and 
 $d(v_{2,1})=v_{u,0}v_{2,u1}$ ($dv=0$ for the other generators $v$).\\

\ \ \ (ii)  $|u|+|v_1|>|v_2|$ and $|v_1|<|v_2|$
$$M(Baut_1(X\times S^n) )\cong (\Lambda (v_{u,0},v_{2,1},v_{3,0},v_{3,u}),d)$$
with
$d(v_{3,0})=v_{u,0}v_{3,u}$.\\

\ \ \ (iii)   $|v_1|=|v_2|$
$$M(Baut_1(X\times S^n) )\cong (\Lambda (v_{u,0},v_{3,0},v_{3,u}),d)$$
with $d(v_{3,0})=v_{u,0}v_{3,u}$.\\

(2)  $|v_1|<|u|\leq |v_2|$\\

\ \ \ (i)  $|u|+|v_1|<|v_2|$
$$M(Baut_1(X\times S^n))\cong (\Lambda (v_{u,0},v_{u,1}, v_{2,1},v_{2,u1}, v_{3,0},v_{3,u}),d)$$
with $d(v_{3,0})=v_{u,0}v_{3,u}$
and $d(v_{2,1})=v_{u,0}v_{2,u1}$.\\

\ \ \ (ii)  $|u|+|v_1|>|v_2|$
$$M(Baut_1(X\times S^n) )\cong (\Lambda (v_{u,0},v_{u,1}, v_{2,1}, v_{3,0},v_{3,u}),d)$$
with $d(v_{3,0})=v_{u,0}v_{3,u}$.\\

(3)  $|v_1|<|v_2|<|u|< |v_3|$
 $$M(Baut_1(X\times S^n))\cong (\Lambda (v_{2,1}, v_{u,0},v_{u,1}, v_{u,2}, v_{3,0},v_{3,u}),d)$$
 with  $d(v_{3,0})=v_{u,0}v_{3,u}$,  $d(v_{u,0})=v_{u,1}v_{u,2}v_{3,u}$ and
  $d(v_{u,1})=v_{2,1}v_{u,2}$.\\

(4)  $|v_1|=|v_2|<|u|< |v_3|$
 $$M(Baut_1(X\times S^n))\cong (\Lambda ( v_{u,0},v_{u,1}, v_{u,2}, v_{3,0},v_{3,u}),d)$$
with
 $d(v_{3,0})=v_{u,0}v_{3,u}$
 and $d(v_{u,0})=v_{u,1}v_{u,2}v_{3,u}$.\\

 (5)  $|v_1|<|v_2|<|v_3|\leq  |u|$\\

\ \ \ (i)  $|v_1|+|v_2|+|v_3|< |u|$
 $$M(Baut_1(X\times S^n))\cong (\Lambda ( v_{2,1},v_{3,0}, v_{u,0}, v_{u,1},v_{u,2},v_{u,13},v_{u,23},v_{u,123}),d)$$
with   $d(v_{u,0})=v_{3,0}^2v_{u,123}$, $d(v_{u,1})=v_{2,1}v_{u,2}+v_{3,0}v_{u,13}$,
 $d(v_{u,2})=v_{3,0}v_{u,23}$ and  $d(v_{u,13})=v_{2,1}v_{u,23}$.\\

 \ \ \ (ii)  $|v_2|+|v_3|<|u|\leq |v_1|+|v_2|+|v_3|$
 $$M(Baut_1(X\times S^n))\cong (\Lambda ( v_{2,1},v_{3,0}, v_{u,0}, v_{u,1},v_{u,2},v_{u,13},v_{u,23}),d)$$
 with $d(v_{u,1})=v_{2,1}v_{u,2}+v_{3,0}v_{u,13}$,
 $d(v_{u,2})=v_{3,0}v_{u,23}$ and 
 $d(v_{u,13})=v_{2,1}v_{u,23}$.\\

 \ \ \ (iii)  $|v_1|+|v_3|<|u|< |v_2|+|v_3|$
 $$M(Baut_1(X\times S^n))\cong (\Lambda ( v_{2,1},v_{3,0}, v_{u,0}, v_{u,1},v_{u,2},v_{u,13}),d)$$
 with 
 $d(v_{u,1})=v_{2,1}v_{u,2}+v_{3,0}v_{u,13}$.\\

 \ \ \ (iv)  $|u|< |v_1|+|v_3|$
 $$M(Baut_1(X\times S^n))\cong (\Lambda ( v_{2,1},v_{3,0}, v_{u,0}, v_{u,1},v_{u,2}),d)$$
 with
 $d(v_{u,1})=v_{2,1}v_{u,2}$.\\

 (6)  $|v_1|=|v_2|<|v_3|\leq  |u|$\\

\ \ \ (i)  $|v_1|+|v_2|+|v_3|< |u|$
 $$M(Baut_1(X\times S^n))\cong (\Lambda ( v_{3,0}, v_{u,0}, v_{u,1},v_{u,2},v_{u,13},v_{u,23},v_{u,123}),d)$$
 with 
 $d(v_{u,1})=v_{3,0}v_{u,13}$ and  $d(v_{u,2})=v_{3,0}v_{u,23}$.\\

 \ \ \ (ii)  $|v_2|+|v_3|<|u|\leq |v_1|+|v_2|+|v_3|$
 $$M(Baut_1(X\times S^n))\cong (\Lambda ( v_{3,0}, v_{u,0}, v_{u,1},v_{u,2},v_{u,13},v_{u,23}),d)$$
 with
 $d(v_{u,1})=v_{3,0}v_{u,13}$
 and $d(v_{u,2})=v_{3,0}v_{u,23}$.\\

 \ \ \ (iii)  $|u|< |v_1|+|v_3|$
$$M(Baut_1(X\times S^n))\cong (\Lambda ( v_{3,0}, v_{u,0}, v_{u,1},v_{u,2}),0).$$

Here 
$v_{u,0}$,  $v_{u,i}$, $v_{i,u}$, $\cdots$
correspond to the derivations 
$(u,1)$, $(u,v_i)$, $(v_i,u)$, $\cdots$ respectively. 
Thus $|v_{u,0}|$, $|v_{u,ij}|$, $|v_{i,uj}|$ are even and    
$|v_{i,u}|$, $|v_{u,i}|$, $|v_{u,ijk}|$ are odd. \\

 Thus we have the following

\begin{thm}\label{table}
When  $M(X)=(\Lambda (v_1,v_2,v_3),d)$
with $|v_i|$ odd,  $dv_1=dv_2=0$ and  $dv_3=v_1v_2$,
 there are 14-types of $Baut_1(X\times S^n)$ with $n$ odd as
{\small {\rm
\begin{center}
\begin{tabular}{|l|c|c|c|c|c|c|c|c|c|c|}
\hline
type & $f_X$&$r_X$&$w.r_X$&$f_{S^n}$&$r_{S^n}$&$w.r_{S^n}$&formal&coformal & rank&$H^*$-free\\
\hline
$(1)_i$ &no&no&no&no&yes&yes&no&yes&5&no\\
\hline
$(1)_{ii}$ &no&no&no&no&yes&yes&no&yes&4&no\\
\hline
$(1)_{iii}$ &no&no&no&no&yes&yes&no&yes&3&no\\
\hline
$(2)_{i}$ &no&no&no&no&yes&yes&no&yes&6&no\\
\hline
$(2)_{ii}$ &no&no&no&no&yes&yes&no&yes&5&no\\
\hline
$(3)$ &no&no&no&no&no&yes&no&no&6&no\\
\hline
$(4)$ &no&no&no&no&no&yes&no&no&5&no\\
\hline
$(5)_{i}$&no&yes&yes&no&no&yes&no&no&8&no\\
\hline
$(5)_{ii}$&no&yes&yes&yes&yes&yes&no&yes&7&no\\
\hline
$(5)_{iii}$&no&yes&yes&yes&yes&yes&yes&yes&6&no\\
\hline
$(5)_{iv}$&no&yes&yes&yes&yes&yes&no&yes&5&no\\
\hline
$(6)_{i}$&no&yes&yes&yes&yes&yes&yes&yes&7&no\\
\hline
$(6)_{ii}$&no&yes&yes&yes&yes&yes&yes&yes&6&no\\
\hline
$(6)_{iii}$&yes&yes&yes&yes&yes&yes&yes&yes&4&yes\\
\hline
\end{tabular}
\end{center}
}}
\noindent
Here $f_X$ means that $Baut_1X$ is a rational factor of $Baut_1(X\times S^n)$,
$r_X$ means that $Baut_1X$ has  a rational retraction for $Baut_1(X\times S^n)$ and 
$w.r_X$ means that $Baut_1X$ has a weak rectraction  for $Baut_1(X\times S^n)$.
Futhermore $f_{S^n}$, $r_{S^n}$ and $w.r_{S^n}$ are similar symbols for $Baut_1S^n$.
$H^*$-free means that $H^*(Baut_1(X\times S^n);\Q)$ is free.
\end{thm}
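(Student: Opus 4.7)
The plan is to verify each row of the table by analyzing in turn the fourteen explicit Sullivan minimal models of $Baut_1(X\times S^n)$ constructed in cases $(1)$--$(6)$ just above the statement. By Theorem~\ref{dermodel}, each generator $v_{i,j}$, $v_{u,j}$, $v_{u,jk}$, etc.\ corresponds to a derivation class on $M(X\times S^n)$, and the quadratic part of the differential $D$ is read off from the Samelson brackets on these derivations, organized according to the decomposition $H_*(DerM(X\times S^n))=A\oplus B\oplus C$ of Proposition~\ref{abc}. Each column of the table then becomes a routine check against these explicit DGAs.

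For the bookkeeping columns: the \emph{rank} entry is just a count of generators listed in each model. The \emph{$H^*$-free} column holds precisely when $D\equiv 0$, which by inspection occurs only in case $(6)_{iii}$. The \emph{coformal} column holds exactly when $D$ is purely quadratic; the only failures are cases $(3)$, $(4)$, and $(5)_i$, where a genuine cubic term such as $v_{u,1}v_{u,2}v_{3,u}$ or $v_{3,0}^{2}v_{u,123}$ appears in $D(v_{u,0})$. For the \emph{formal} column, I would exhibit an explicit quasi-isomorphism to $(H^*;0)$ when claimed (the cases with trivial or easily split-off differential), and for the non-formal cases detect an obstruction via a non-trivial Massey triple product constructed from the surviving odd-degree generators coupled by the relation $D(v_{u,1})=v_{2,1}v_{u,2}+v_{3,0}v_{u,13}$ or its analogue.

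For the retraction/factor columns I would apply Lemma~\ref{retr} together with Proposition~\ref{abc}. The model morphism $M(Bj_X):M(Baut_1(X\times S^n))\to M(Baut_1 X)$ is the obvious projection killing the generators corresponding to derivations involving $u$; the column $r_X$ is then yes precisely when $D$ preserves the subalgebra on the retained $v_{i,j}$ generators, and similarly for $r_{S^n}$. The weak columns $w.r_X$ and $w.r_{S^n}$ are controlled by the brackets $[B,C]=0$ and $[A,C]=0$ respectively, which via Claim~\ref{ABC} amount to the vanishing of the quadratic part of $D$ on mixed generators. The full rational-factor columns $f_X$ and $f_{S^n}$ require in addition that the complementary differential remain within the complementary subalgebra, a condition one verifies directly from the listed $D$.

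The main obstacle is not conceptual but organizational: fourteen cases against ten columns yields well over a hundred small verifications, each sensitive to the precise degree inequality defining the case. The subtlest entries are the borderline cases $(5)_{ii}$, $(5)_{iii}$, $(6)_i$, and $(6)_{ii}$, where the constraint $|u|\leq |v_1|+|v_2|+|v_3|$ forces certain a priori obstructing brackets to vanish for pure degree reasons, which is what causes the formal, coformal, and retraction properties to improve monotonically as $|u|$ grows through the thresholds $|v_1|+|v_3|$, $|v_2|+|v_3|$, and $|v_1|+|v_2|+|v_3|$. Once these degree bookkeeping lemmas are isolated, the remaining verifications are purely mechanical and the table is assembled.
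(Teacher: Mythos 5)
Your proposal is correct and follows essentially the same route as the paper: all ten columns are read off, case by case, from the explicit models of $Baut_1(X\times S^n)$ listed in \S 4, using Lemma \ref{retr} (with $M(Baut_1X)=(\Lambda v_{3,0},0)$ or $(\Lambda (v_{3,0},v_{2,1}),0)$ and $M(Baut_1S^n)=(\Lambda v_{u,0},0)$) for the factor/retraction columns and the form of the differential for rank, freeness and coformality. The only divergence is the formality column, which the paper settles by citing \cite[\S 4]{s2} while you propose direct quasi-isomorphisms and Massey products; that is workable, but note it is the one non-mechanical part of your plan, since cases such as $(5)_{iii}$, $(6)_i$, $(6)_{ii}$ are formal despite having nonzero differentials and so need an actual quasi-isomorphism, not just an ``easily split-off differential'' remark.
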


\begin{proof}
Since 
$M(Baut_1X)=(\Lambda v_{3,0},0)$ or $(\Lambda (v_{3,0},v_{2,1}),0)$
and $M(Baut_1S^n)=(\Lambda v_{u,0},0)$,
we can check  whether or not  the properties $f_X$, .., $w.r_{S^n}$ are true respectively from Lemma  \ref{retr}.
See \cite[\S 4]{s2} for   formality conditions.
\end{proof}



\begin{cor} Suppose  $M(X)=(\Lambda (v_1,v_2,v_3),d)$ where $|v_i|$ are odd, $dv_1=dv_2=0$,   $dv_3=v_1v_2$
and 
$n$ is odd. Then\\
(1) The followings are equivalent:

(i) $H^*(Baut_1(X\times S^n);\Q )$ is free,

(ii) 
$|v_1|=|v_2|<|v_3|\leq n<|v_1|+|v_3|$ and

(iii)  $Baut_1X$ is a rational factor of $Baut_1(X\times S^n)$.\\
(2) $Baut_1(X\times S^n)$ is coformal if and only if $Baut_1S^n$ has  a rational retraction for $Baut_1(X\times S^n)$.\\
(3) If $Baut_1(X\times S^n)$ is formal,  $Baut_1S^n$ is a rational factor of $Baut_1(X\times S^n)$
and $Baut_1X$ has  a rational retraction for $Baut_1(X\times S^n)$.\\
(4) $Baut_1X$ has  a rational retraction for $Baut_1(X\times S^n)$
if and only if it does a weak retraction.\\
(5) $Baut_1S^n$ has  a weak  retraction for $Baut_1(X\times S^n)$.
\end{cor}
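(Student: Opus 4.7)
The plan is to deduce each of the five assertions by reading off the 14-row table of Theorem \ref{table}, combined with the DGA-level characterizations of \emph{rational factor}, \emph{rational retraction}, and \emph{weak retraction} in Lemma \ref{retr}. Once one has the explicit minimal models of $Baut_1(X\times S^n)$ listed in \S 4, there is no further topological content to extract; everything reduces to inspecting columns of that table.

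For (1), I would observe that the column labelled ``$H^*$-free'' takes the value ``yes'' in exactly one row, namely $(6)_{iii}$, which is the situation $|v_1|=|v_2|<|v_3|\leq n<|v_1|+|v_3|$. This gives (i)$\Leftrightarrow$(ii). The column ``$f_X$'' takes ``yes'' in the same single row, yielding (i)$\Leftrightarrow$(iii). Conceptually, $|v_1|=|v_2|$ forces $M(Baut_1X)=(\Lambda v_{3,0},0)$, and the inclusion into $M(Baut_1(X\times S^n))$ splits off as a tensor factor precisely when the total differential vanishes, which happens uniquely in case $(6)_{iii}$.

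For (2) and (3), I would compare the relevant columns row by row. The columns ``coformal'' and ``$r_{S^n}$'' agree: both fail exactly in rows $(3),(4),(5)_i$, where $d(v_{u,0})$ acquires a cubic term of the shape $v_{u,1}v_{u,2}v_{3,u}$ or $v_{3,0}^2v_{u,123}$; such a cubic contribution simultaneously destroys coformality of the total model and blocks a rational retraction onto $M(Baut_1S^n)=(\Lambda v_{u,0},0)$. For (3), the four ``formal'' rows $(5)_{iii},(6)_i,(6)_{ii},(6)_{iii}$ each carry ``yes'' in both the $f_{S^n}$ and $r_X$ columns, an immediate lookup.

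For (4), the columns $r_X$ and $w.r_X$ coincide in every row: since $M(Baut_1X)$ is either $(\Lambda v_{3,0},0)$ or $(\Lambda(v_{2,1},v_{3,0}),0)$, in each of the 14 explicit differentials the subalgebra of $Baut_1X$-generators is either preserved, or is already spoiled at the quadratic level, so Lemma \ref{retr}(1) holds precisely when Lemma \ref{retr}(2) does. Finally (5) follows because the $w.r_{S^n}$ column is uniformly ``yes'': the sole generator $v_{u,0}$ of $M(Baut_1S^n)$ has even degree $n+1$, and a scan of \S 4 shows it never appears as a pure quadratic target $v_{u,0}\cdot v_{u,0}$ of any differential, so the quadratic weak-retraction criterion of Lemma \ref{retr}(2) is trivially satisfied for the inclusion $Baut_1S^n\hookrightarrow Baut_1(X\times S^n)$. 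The main bookkeeping obstacle is the case-by-case verification for (4); the genuine content has already been packaged into Theorem \ref{table}.
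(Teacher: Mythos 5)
Your proposal is correct and follows essentially the same route as the paper: the corollary is obtained there as a direct read-off of the table in Theorem \ref{table}, whose entries are themselves verified from the explicit models of \S 4 via Lemma \ref{retr}, exactly as you do column by column. Your only slip is cosmetic: for (5) the relevant point is that $D(v_{u,0})$ is in every case either zero or purely cubic (so its quadratic part trivially lies in $\Lambda^2\langle v_{u,0}\rangle$), not that $v_{u,0}^2$ fails to occur as a target; the conclusion is unaffected.
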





\section{Case of $X={SU(6)}/{SU(3)\times SU(3)}$}

Let 
$X={SU(6)}/{SU(3)\times SU(3)}$.
Then 
$M(X)=(\Lambda (x_1, x_2, y_1, y_2, y_3),  d) $
with 
$ |x_1  |=4$, $|x_2  |=6$, $|y_1  |=7$,   $|y_2  |=9$, $|y_3  |=11$,
$dx_1=dx_2=0$, $dy_1=x_1^2$, $dy_2=x_1x_2$ and  $dy_3=x_2^2$. 
Then a  basis of $DerM(X)$  is given as the table:
\begin{center}
\begin{tabular}{l|l}

\multicolumn{1}{c|}{degree} &
\multicolumn{1}{c}{generators}  \\
      
\hline
$11$ & $(y_3,1)$  \\
\hline
$9$ & $(y_2,1)$ \\
\hline
$7$ & $(y_3,x_1)$, \ $(y_1,1)$ \\
\hline
$6$ & $(x_2,1)$ \\
\hline
$5$ & $(y_3,x_2)$, \ $(y_2,x_1)$ \\
\hline
$4$ & $(y_3,x_1)$, \ $(x_1,1)$ \\
\hline
$3$ & $(y_3,x_1^2)$, \ $(y_2,x_2)$, \ $(y_1,x_1)$ \\
\hline
$2$ & $(y_3,x_2)$, \ $(y_2,x_1)$, $(x_2,x_1)$ \\
\hline
$1$ & $(y_3,x_1x_2)$, \ $(y_2,x_1^2)$, \ $(y_1,x_2)$ \\
\end{tabular}\\
\end{center}
\noindent

\begin{prop}For $M(X)=(\Lambda (x_1, x_2, y_1, y_2, y_3),  d) $ of above, we have 
$$\pi_{*+1}(Baut_1X)\otimes \Q     \cong
\Q \{ \ s(y_3,1), \ s(y_2,1), \ s(y_1,1), \ s(y_1,x_1), \ s(y_1,x_2),\ s(y_3,x_1), \ s(y_3,x_2),   $$
$$ s(2(y_3,y_2)+(y_2,y_1)+(x_2,x_1)) \ \}$$
as a graded vector space.
Here $|s((v,f)+\cdots) |=|v|-|f|+1$.
\end{prop}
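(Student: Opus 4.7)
By Theorem \ref{dermodel}, $\pi_{*+1}(Baut_1X)\otimes\Q\cong H_*(DerM(X),\partial)$, so the task reduces to computing the homology of $(DerM(X),\partial)$ using the explicit basis of elementary derivations above. My plan is to compute $\partial$ on each elementary derivation and then determine kernels and images degree by degree. Recall that for an elementary derivation $(v,f)$ of degree $i=|v|-|f|$, one has
\[
\partial(v,f)=-(-1)^{i}\sum_{j}\bigl(y_{j},\,(v,f)(dy_{j})\bigr)+\bigl(v,d(f)\bigr),
\]
so that $\partial(v,f)$ is determined by evaluating $(v,f)$ on the quadratic relations $dy_{1}=x_{1}^{2}$, $dy_{2}=x_{1}x_{2}$, $dy_{3}=x_{2}^{2}$.

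First I would dispatch the ``top'' derivations: $(y_{3},1),(y_{2},1),(y_{1},1),(y_{3},x_{1}),(y_{3},x_{2}),(y_{2},x_{1}),(y_{1},x_{1}),(y_{2},x_{2}),(y_{3},x_{1}^{2})$ are all $\partial$-cycles since in each case the image $(v,f)(dy_{i})$ vanishes. Then the derivations whose differentials are non-trivial are the even-degree ones: I would compute
\[
\partial(x_{1},1)=-2(y_{1},x_{1})-(y_{2},x_{2}),\quad \partial(x_{2},1)=-(y_{2},x_{1})-2(y_{3},x_{2}),
\]
\[
\partial(y_{3},y_{1})=(y_{3},x_{1}^{2}),\quad \partial(y_{3},y_{2})=(y_{3},x_{1}x_{2}),\quad \partial(y_{2},y_{1})=(y_{2},x_{1}^{2}),
\]
\[
\partial(x_{2},x_{1})=-(y_{2},x_{1}^{2})-2(y_{3},x_{1}x_{2}).
\]
From these formulas the cohomology in each degree drops out immediately: in degree $7$ there are no degree-$8$ derivations so $H_{7}=\Q\{(y_{1},1),(y_{3},x_{1})\}$; $H_{9},H_{11}$ are analogous; in degree $5$ the two cycles $(y_{3},x_{2}),(y_{2},x_{1})$ are related modulo $\partial(x_{2},1)$, giving a $1$-dimensional class $(y_{3},x_{2})$; in degree $3$ the three cycles collapse under $\partial(x_{1},1)$ and $\partial(y_{3},y_{1})$ to the $1$-dimensional class $(y_{1},x_{1})$; in degree $1$ the three cycles collapse under $\partial(y_{2},y_{1})$ and $\partial(y_{3},y_{2})$ to $(y_{1},x_{2})$. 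Degrees $4,6,8,10$ have no cycles.

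The only subtle step is degree $2$: here all three basis derivations $(y_{3},y_{2}),(y_{2},y_{1}),(x_{2},x_{1})$ have non-trivial $\partial$, so I would read off the kernel of the $3\times 3$ matrix above and verify that its $1$-dimensional kernel is spanned exactly by $2(y_{3},y_{2})+(y_{2},y_{1})+(x_{2},x_{1})$, the combination appearing in the proposition. Since $Der_{3}M(X)$ is generated by cycles, there are no boundaries into degree $2$ and this class survives. Suspending each surviving cohomology class then gives the stated basis of $\pi_{*+1}(Baut_{1}X)\otimes\Q$. The main (indeed only) obstacle is the degree-$2$ linear-algebra computation, where one must be careful that the kernel is truly one-dimensional and coincides with the indicated combination; everything else is a direct bookkeeping exercise on the table of elementary derivations.
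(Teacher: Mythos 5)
Your proposal is correct and follows essentially the same route as the paper: the paper's proof likewise computes $\partial$ on the basis of elementary derivations (your listed values of $\partial(x_1,1)$, $\partial(x_2,1)$, $\partial(y_3,y_1)$, $\partial(y_3,y_2)$, $\partial(y_2,y_1)$, $\partial(x_2,x_1)$ agree with it) and then reads off $H_*(DerM(X))=\Ker(\partial)/\image(\partial)$ via Theorem \ref{dermodel}. You merely spell out the degree-by-degree bookkeeping, including the degree-$2$ kernel spanned by $2(y_3,y_2)+(y_2,y_1)+(x_2,x_1)$, which the paper leaves implicit.
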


\begin{proof}
Recall \S 2. The differential $\partial$ on the generators of of $DerM(X)$ is given as 
\begin{align*}
&\partial \left((y_3,1) \right )=\partial \left((y_2,1) \right )=\partial \left((y_3,x_1) \right )=\partial \left((y_1,1) \right )=\partial \left((y_3,x_2) \right )= \partial \left((y_2,x_1) \right )=0 \\
&\partial \left((y_3,x_1x_2) \right )= \partial \left((y_2,x_1^2) \right )= \partial \left((y_1,x_2) \right )=\partial \left((y_3,x_1^2) \right )= \partial \left((y_2,x_2) \right )= \partial \left((y_1,x_1) \right )=0\\
&\partial \left((x_2,1) \right )=-(y_2,x_1)-2(y_3,x_2),\  \partial \left((y_3,y_1) \right )=(y_3,x_1^2), \ \partial \left((x_1,1) \right )=-2(y_1,x_1)-(y_2,x_2) \\
&\partial \left((y_3,y_2) \right )=(y_3,x_1x_2), \ \partial \left((y_2,y_1) \right )=(y_2,x_1^2), \ \partial \left((x_2,x_1) \right )=-(y_2,x_1^2)-2(y_3,x_1x_2). 
\end{align*}
Then we obtain the result from $\pi_{*+1}(Baut_1X)\otimes \Q\cong H_*(DerM(X))=Ker(\partial)/Im(\partial)$ of Theorem \ref{dermodel}.
\end{proof}

\begin{prop}\label{nonm}
The DGA-model $m(Baut _1X):=C^*(DerM(X))$ is given as 
\begin{align*}
m(Baut _1X)\cong (\Lambda (&V_{y_3,1}, V_{y_2,1}, V_{y_3,x_1}, V_{y_1,1}, V_{x_2,1}, V_{w_1}, V_{y_2,x_1}, V_{y_3,y_1}, V_{x_1,1}, \\
&V_{y_3,x_1^2}, V_{y_2,x_2}, V_{w_2}, V_{y_3,y_2}, V_{y_2,y_1}, V_{x_2,x_1}, V_{y_3,x_1x_2}, V_{y_2,x_1^2}, V_{y_1,x_2}), D  ) 
\end{align*}
\noindent
with
$|V_{y_3,1}  |=12$, $|V_{y_2,1} |=10$, $ |V_{y_3,x_1}  |= |V_{y_1,1}  |=8$,  $ |V_{x_2,1}  |=7$, 
$ |V_{w_1}  |= |V_{y_2,x_1}  |=6$, $ |V_{y_3,y_1}  |= |V_{x_1,1}  |=5$, $ |V_{y_3,x_1^2}  |= |V_{y_2,x_2}  |= |V_{w_2}  |=4$, 
$ |V_{y_3,y_2}  |= |V_{y_2,y_1}  |= |V_{x_2,x_1}  |=3$, $ |V_{y_3,x_1x_2}  |= |V_{y_2,x_1^2}  |= |V_{y_1,x_2}  |=2$ and differential:
\begin{align*}
&D(V_{y_3,1})=-V_{x_1,1}V_{y_3,x_1}+V_{x_2,1}V_{w_1}-2V_{x_2,1}V_{y_2,x_1}+V_{y_1,1}V_{y_3,y_1}+V_{y_2,1}V_{y_3,y_2} \\
&D(V_{y_2,1})=-V_{x_1,1}V_{y_2,x_1}-V_{x_2,1}V_{y_2,x_2}+V_{y_1,1}V_{y_2,y_1} \\
&D(V_{y_3,x_1})=V_{x_2,x_1}V_{w_1}-2V_{x_2,x_1}V_{y_2,x_1}-V_{w_2}V_{y_3,y_1}+2V_{y_2,x_2}V_{y_3,y_1}-2V_{x_1,1}V_{y_3,x_1^2}\\
&\ \ \ \ \ \ \ \ \ \ \ \ \ +V_{y_2,x_1}V_{y_3,y_2}-V_{x_2,1}V_{y_3,x_1x_2} \\
&D(V_{y_1,1})=V_{x_1,1}V_{w_2}-2V_{x_1,1}V_{y_2,x_2}-V_{x_2,1}V_{y_1,x_2} \\
&D(V_{x_2,1})=-V_{x_1,1}V_{x_2,x_1} \\
&D(V_{w_1})=-2V_{x_2,x_1}V_{y_2,x_2}-4V_{x_1,1}V_{y_2,x_1^2}+4V_{y_2,x_2}V_{y_2,y_1}-2V_{w_2}V_{y_2,y_1}-V_{y_1,x_2}V_{y_3,y_1}\\
&\ \ \ \ \ \ \ \ \ \ \ \ \ -V_{y_2,x_2}V_{y_3,y_2}+V_{x_1,1}V_{y_3,x_1x_2} \\
&D(V_{y_2,x_1})=V_{x_2,1}-V_{x_2,x_1}V_{y_2,x_2}-V_{w_2}V_{y_2,y_1}+2V_{y_2,x_2}V_{y_2,y_1}-2V_{x_1,1}V_{y_2,x_1^2} \\
&D(V_{y_3,y_1})=-V_{y_2,y_1}V_{y_3,y_2} \\
&D(V_{y_3,x_1^2})=-V_{y_3,y_1}-V_{x_2,x_1}V_{y_3,x_1x_2}+V_{y_2,x_1^2}V_{y_3,y_2} \\
&D(V_{y_2,x_2})=V_{x_1,1}+V_{y_1,x_2}V_{y_2,y_1} \\
&D(V_{w_2})=2V_{y_1,x_2}V_{y_2,y_1}+V_{x_2,x_1}V_{y_1,x_2} \\
&D(V_{y_3,x_1x_2})=-V_{y_3,y_2}+2V_{x_2,x_1} \\
&D(V_{y_2,x_1^2})=-V_{y_2,y_1}+V_{x_2,x_1} \mbox{ and}\\
&D(V_{x_1,1})=D(V_{y_3,y_2})=D(V_{y_2,y_1})=D(V_{x_2,x_1})=D(V_{y_1,x_2})=0. \\
\end{align*}
\end{prop}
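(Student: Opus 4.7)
The plan is to apply Theorem \ref{dermodel} directly: $m(Baut_1X)=C^*(DerM(X),\partial)$, realised through the explicit cochain construction $(\Lambda s^{-1}\sharp L,D=d_1+d_2)$ recalled in \S 2. The inputs are (a) the basis of $DerM(X)$ tabulated just before Proposition \ref{nonm}, and (b) the differential $\partial$ on this basis, already written out in the proof of the preceding proposition. To each basis derivation $\alpha$ one assigns a generator $V_\alpha:=s^{-1}\alpha^\ast$ of degree $|\alpha|+1$; reading off the table immediately produces the 18 generators with the degrees listed in the statement.

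First I compute the linear part $d_1$ from $\langle d_1 s^{-1}z; sx\rangle=-\langle z;\partial x\rangle$: the coefficient of $V_\alpha$ in $d_1V_\beta$ equals $-(\text{coefficient of }\beta\text{ in }\partial\alpha)$. For example, $\partial(x_2,x_1)=-(y_2,x_1^2)-2(y_3,x_1x_2)$ immediately produces the $V_{x_2,x_1}$ summands inside $DV_{y_2,x_1^2}$ and $DV_{y_3,x_1x_2}$, and running this routine over the full list of $\partial$-formulas recovers every purely linear contribution in the stated $D$.

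Next, the quadratic part $d_2$ is computed via $\langle d_2 s^{-1}z; sx_1,sx_2\rangle=(-1)^{|x_1|}\langle z;[x_1,x_2]\rangle$. For each unordered pair $(\beta,\gamma)$ of basis derivations I expand the bracket $[\beta,\gamma]=\beta\gamma-(-1)^{|\beta||\gamma|}\gamma\beta$: if $\beta=(v,f)$ and $\gamma=(w,g)$, only terms where $w$ appears in $f$ or $v$ appears in $g$ survive, so many pairs are ruled out immediately. The resulting derivation is expanded in the basis and its $\alpha$-component contributes a signed summand $\pm V_\beta V_\gamma$ to $DV_\alpha$. For instance $[(x_1,1),(y_3,x_1)]=(y_3,1)$ yields the $-V_{x_1,1}V_{y_3,x_1}$ summand of $DV_{y_3,1}$, $[(y_1,1),(y_3,y_1)]=(y_3,1)$ gives the $V_{y_1,1}V_{y_3,y_1}$ summand, and assembling such contributions across all 18 generators produces the stated formulas. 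A built-in sanity check is $D^2=0$, which follows from $\partial^2=0$, the Jacobi identity, and $\partial$ being a derivation of the bracket.

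The principal obstacle is bookkeeping in the quadratic computation: there are $\binom{18}{2}=153$ candidate pairs, and each surviving bracket requires careful Koszul sign tracking under the convention $[\sigma,\tau]=\sigma\tau-(-1)^{|\sigma||\tau|}\tau\sigma$ together with the desuspension signs fixed in \S 2. The workload is substantially cut by two a priori restrictions---that $|V_\beta|+|V_\gamma|$ must equal $|V_\alpha|$ for the bracket to contribute to $DV_\alpha$, and that the composition of two elementary derivations vanishes unless the support condition above is met---leaving a manageable finite list of nonzero brackets whose signs must simply be computed honestly.
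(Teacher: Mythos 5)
Your overall plan is exactly the paper's proof: list the basis of $DerM(X)$, compute $d_1$ from the formula $\langle d_1s^{-1}z;sx\rangle=-\langle z;\partial x\rangle$ using the $\partial$-values on the basis, compute $d_2$ from the pairwise brackets via $\langle d_2s^{-1}z;sx_1,sx_2\rangle=(-1)^{|x_1|}\langle z;[x_1,x_2]\rangle$, and set $D=d_1+d_2$; your sample terms (e.g.\ the $-V_{x_1,1}V_{y_3,x_1}$ and $V_{y_1,1}V_{y_3,y_1}$ summands of $D(V_{y_3,1})$) agree with the paper's computation.

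There is, however, one concrete step you omit, and without it your procedure does not literally produce the stated presentation. The $18$ generators in the Proposition are \emph{not} all duals of the tabulated elementary derivations: the basis in degrees $5$ and $3$ contains $(y_3,x_2)$, $(y_2,x_1)$ and $(y_1,x_1)$, $(y_2,x_2)$, $(y_3,x_1^2)$, so your recipe ``$V_\alpha:=s^{-1}\alpha^{\ast}$'' yields generators $V_{y_3,x_2}$ and $V_{y_1,x_1}$, and a differential with, for instance, $d_1(V_{y_3,x_2})=2V_{x_2,1}$, $d_1(V_{y_2,x_1})=V_{x_2,1}$, $d_1(V_{y_2,x_2})=V_{x_1,1}$, $d_1(V_{y_1,x_1})=2V_{x_1,1}$, and a term $-V_{x_2,1}V_{y_3,x_2}$ in $D(V_{y_3,1})$. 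To arrive at the formulas as stated one must perform the change of generators $V_{w_1}:=2V_{y_2,x_1}-V_{y_3,x_2}$ and $V_{w_2}:=2V_{y_2,x_2}-V_{y_1,x_1}$ (so that $d_1(V_{w_1})=d_1(V_{w_2})=0$) and rewrite all quadratic terms accordingly; this is precisely how the paper ends its proof, and it is where terms such as $V_{x_2,1}V_{w_1}-2V_{x_2,1}V_{y_2,x_1}$ in $D(V_{y_3,1})$ and the $V_{w_2}$-summands elsewhere come from. Your argument as written produces an isomorphic free DGA, but you should state this linear substitution explicitly (or otherwise explain what $V_{w_1},V_{w_2}$ denote) to obtain the Proposition verbatim; the rest is the same finite bookkeeping the paper carries out.
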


\begin{proof}
Recall \S 2 $(i)$.
The differential $d_1$ is given as 
$$d_1(V_{y_3,1})=d_1(V_{y_2,1})=d_1(V_{y_3,x_1})=d_1(V_{y_1,1})=d_1(V_{x_2,1})=d_1(V_{y_3,y_1})=0,$$
$$d_1(V_{y_3,y_2})=d_1(V_{y_2,y_1})=d_1(V_{x_2,x_1})=d_1(V_{x_1,1})=d_1(V_{y_1,x_2})=0, $$
$$d_1(V_{y_3,x_2})=2V_{x_2,1}, \ d_1(V_{y_2,x_1})=V_{x_2,1},\ d_1(V_{y_3,x_1^2})=-V_{y_3,y_1},\  d_1(V_{y_2,x_2})=V_{x_1,1},$$
$$\ d_1(V_{y_1,x_1})=2V_{x_1,1}, \ d_1(V_{y_3,x_1x_2})=-V_{y_3,y_2}+2v_{x_2,x_1}, \ d_1(V_{y_2,x_1^2})=-V_{y_2,y_1}+V_{x_2,x_1} .$$

Next, the Lie bracket is given as 
$$[(x_1,1), (y_3,x_1)]=(y_3,1), \
[(x_2,1), (y_3,x_2)]=(y_3,1), \
[(y_1,1), (y_3,y_1)]=(y_3,1), $$
$$[(y_2,1), (y_3,y_2)]=(y_3,1), \
[(x_1,1), (y_2,x_1)]=(y_2,1), \
[(x_2,1), (y_2,x_2)]=(y_2,1), $$
$$[(y_1,1), (y_2,y_1)]=(y_2,1), \
[(x_2,x_1), (y_3,x_2)]=(y_3,x_1), \
[(y_1,x_1), (y_3,y_1)]=(y_3,x_1), $$
$$\frac{1}{2}[(x_1,1), (y_3,x_1^2)]=(y_3,x_1), \
[(y_2,x_1), (y_3,y_2)]=(y_3,x_1), \
[(x_2,1), (y_3,x_1x_2)]=(y_3,x_1), $$
$$[(x_1,1), (y_1,x_1)]=(y_1,1), \
[(x_2,1), (y_1,x_2)]=(y_1,1), \
[(x_1,1), (x_2,x_1)]=(x_2,1), $$
$$[(y_1,x_2), (y_3,y_1)]=(y_3,x_2), \
[(y_2,x_2), (y_3,y_2)]=(y_3,x_2), \
[(x_1,1), (y_3,x_1x_2)]=(y_3,x_2), $$
$$[(x_2,x_1), (y_2,x_2)]=(y_2,x_1), \
[(y_1,x_1), (y_2,y_1)]=(y_2,x_1), \
\frac{1}{2}[(x_1,1), (y_2,x_1^2)]=(y_2,x_1), $$
$$[(y_2,y_1), (y_3,y_2)]=(y_3,y_1), \
[(x_2,x_1), (y_3,x_1x_2)]=(y_3,x_1^2), \
[(y_2,x_1^2), (y_3,y_2)]=(y_3,x_1^2), $$
$$[(y_1,x_2), (y_2,y_1)]=(y_2,x_2) \
\mbox{ and } \ [(x_2,x_1), (y_1,x_2)]=(y_1,x_1). \ \ \ \ \ \ \ \ \ \ \ \ \ \ \ \ \ \ \ \ \ \ \ \ 
$$

Then the  differential 
$d_2$ of \S 2 $(ii)$ is given as 
\begin{align*}
&d_2(V_{y_3,1})=-V_{x_1,1}V_{y_3,x_1}-V_{x_2,1}V_{y_3,x_2}+V_{y_1,1}V_{y_3,y_1}+V_{y_2,1}V_{y_3,y_2} \\
&d_2(V_{y_2,1})=-V_{x_1,1}V_{y_2,x_1}-V_{x_2,1}V_{y_2,x_2}+V_{y_1,1}V_{y_2,y_1} \\
&d_2(V_{y_3,x_1})=-V_{x_2,x_1}V_{y_3,x_2}+V_{y_1,x_1}V_{y_3,y_1}-2V_{x_1,1}V_{y_3,x_1^2}+V_{y_2,x_1}v_{y_3,y_2}-V_{x_2,1}V_{y_3,x_1x_2} \\
&d_2(V_{y_1,1})=-V_{x_1,1}V_{y_1,x_1}-V_{x_2,1}V_{y_1,x_2} \\
&d_2(V_{x_2,1})=-V_{x_2,1}V_{y_1,x_2} \\
&d_2(V_{y_3,x_2})=V_{y_1,x_2}V_{y_3,y_1}+V_{y_2,x_2}V_{y_3,y_2}-V_{x_1,1}V_{y_3,x_1x_2} \\
&d_2(V_{y_2,x_1})=-V_{x_2,x_1}V_{y_2,x_2}+V_{y_1,x_1}V_{y_2,y_1}-2V_{x_1,1}V_{y_2,x_1^2} \\
&d_2(V_{y_3,y_1})=-V_{y_2,y_1}V_{y_3,y_1} \\
&d_2(V_{y_3,x_1^2})=-V_{x_2,x_1}V_{y_3,x_1x_2}+V_{y_2,x_1^2}V_{y_3,y_2} \\
&d_2(V_{y_2,x_2})=V_{y_1,x_2}V_{y_2,y_1} \\
&d_2(V_{y_1,x_1})=-V_{x_2,x_1}V_{y_1,x_2} \ \ \  {\rm and} \\
&d_2(V_{x_1,1})=d_2(V_{y_3,y_2})=d_2(V_{y_2,y_1})=d_2(V_{x_2,x_1})=d_2(V_{y_3,x_1x_2})=d_2(V_{y_2,x_1^2})=d_2(V_{y_1,x_2})=0. 
\end{align*}
Let 
$V_{w_1}:=2V_{y_2,x_1}-V_{y_3,x_2}$ and  $V_{w_2}:=2V_{y_2,x_2}-V_{y_1,x_1} $.
Then, for $D=d_1+d_2$,  we have done from Theorem \ref{dermodel}.
\end{proof}

\begin{lem}\label{dga}Let $M=(\Lambda V,d)$ be a minimal DGA and $m=(\Lambda U,D)$ a free DGA with $D=d_1+d_2$.
Here $d_i:U\to \Lambda^i U$. 
Suppose that  there is a DGA-map $\varphi :M\to m$ such that $\varphi^{\sharp}:V\cong H^*( U, d_1)$.
Then  $\varphi $ is a quasi-isomorphism, i.e., $M$ is the minimal model of $m$.
\end{lem}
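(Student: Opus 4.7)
The plan is to apply a filtered spectral-sequence comparison to $\varphi:M\to m$. On each of $(\Lambda V,d)$ and $(\Lambda U,D)$ I would take the word-length filtration $F^p:=\Lambda^{\geq p}$. Since $M$ is minimal, $d(V)\subset\Lambda^{\geq 2}V$ forces $d(F^p)\subset F^{p+1}$; since $D=d_1+d_2$ with $d_1$ word-length-preserving and $d_2$ raising word length by one, $D(F^p)\subset F^p$. As an algebra map, $\varphi$ sends $V$ into $\Lambda^{\geq 1}U=F^1$, so $\varphi$ respects the filtrations. Because $V$ and $U$ are generated in positive degree, each filtration is degreewise bounded, and the two associated spectral sequences converge strongly to $H^*(M)$ and $H^*(m)$ respectively.

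Next I would identify the $E_1$-pages. On the $M$-side, the induced differential on $E_0(M)=\Lambda V$ vanishes (by minimality), so $E_1(M)=\Lambda V$. On the $m$-side, the induced differential on $E_0(m)=\Lambda U$ is precisely $d_1$, and over $\Q$ the classical K\"unneth-type identification
$$H^*(\Lambda U,d_1)\cong \Lambda H^*(U,d_1)$$
gives $E_1(m)\cong \Lambda H^*(U,d_1)$. The induced map $\varphi_1:E_1(M)\to E_1(m)$ is the algebra extension of the linear part of $\varphi|_V$, i.e.\ $\Lambda(\varphi^{\sharp})$, and so is an isomorphism by the hypothesis that $\varphi^{\sharp}:V\cong H^*(U,d_1)$.

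From here, the standard spectral-sequence comparison theorem yields, by induction on $r$, that $\varphi_r$ is an isomorphism for every $r\geq 1$, hence that $\varphi_\infty$ is an isomorphism of associated gradeds. Strong convergence then upgrades this to $\varphi^{*}:H^{*}(M)\xrightarrow{\cong} H^{*}(m)$, so $\varphi$ is a quasi-isomorphism and $M$ is the minimal model of $m$. The main (though mild) technical obstacle is verifying the K\"unneth identification $H^*(\Lambda U,d_1)\cong\Lambda H^*(U,d_1)$; this is carried out by choosing a splitting $U=W\oplus C\oplus d_1C$ with $W\cong H^*(U,d_1)$ and exploiting the Koszul-acyclicity over $\Q$ of the sub-DGA $\Lambda(C\oplus d_1C)$. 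Once that is in hand the remainder is a routine filtered-comparison argument.
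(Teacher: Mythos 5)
Your proposal is correct and follows essentially the same route as the paper: the word-length filtrations $\Lambda^{\geq p}V$ and $\Lambda^{\geq p}U$, preserved by $d$, $D=d_1+d_2$ and $\varphi$, with the hypothesis $\varphi^{\sharp}:V\cong H^*(U,d_1)$ giving an isomorphism on the $E_1$-terms, and the standard comparison theorem (McCleary/Spanier) plus degreewise boundedness of the filtration yielding $\varphi^*:H^*(M)\cong H^*(m)$. You simply spell out the $E_1$-identification (minimality killing $d_0$ on the $M$-side, and the characteristic-zero identification $H^*(\Lambda U,d_1)\cong\Lambda H^*(U,d_1)$ on the $m$-side) that the paper leaves implicit.
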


\begin{proof}There are  the  filtrations of the word lengths on $\Lambda V$ and $\Lambda U$, respectively:
$$F^i(\Lambda V):=\Lambda^{\geq i}V\mbox{ \ and \ }F^i(\Lambda U):= \Lambda^{\geq i}U,$$
where $F^0\supset \cdots \supset F^n\supset F^{n+1}\supset \cdots$
and $F^NH^m(\Lambda V)=F^NH^m(\Lambda U)=0$
for a sufficient large $N$ for each $m$.
Note that  differentials and a DGA-map preserve the filtrations.  
From the assumption,   $\varphi $ induces an isomorphism on $E^1$-term of
in the spectral sequence 
of the (decreasing) filtrations. Thus it indicates an isomorphism 
$\varphi^* :H^*(M)\cong  H^*(m)$
 from 
\cite[Theorem 3.2]{Mc}
(\cite[Chap.9 Theorem 3]{Sp}).
\end{proof}

\begin{thm}
The minimal model $M(Baut _1X)$ of the DGA-model $m(Baut_1X)$  is given as follows:
$$M(Baut _1X)\cong (\Lambda (U_{y_3,1}, U_{y_2,1}, U_{y_1,1}, U_{y_3,x_1}, U_{w_1}, U_{w_2}, U_{\sigma }, U_{y_1,x_2}), d )$$
where
$ |U_{y_3,1}  |=12$, $ |U_{y_2,1} |=10$, $ |U_{y_1,1}  |=8$,  $|U_{y_3,x_1}|=8$, 
$|U_{w_1}  |=6$, $ |U_{w_2}  |=4$, $ |U_{\sigma }  |=3$, $|U_{y_1,x_2}  |=2$
with differential
$d(U_{y_3,1})=U_{y_2,1}U_{\sigma }$, 
$d(U_{y_2,1})=U_{y_1,1}U_{\sigma }$, 
$d(U_{y_3,x_1})=U_{w_1}U_{\sigma }$, 
$d(U_{w_1})=U_{w_2}U_{\sigma } $,
$d(U_{w_2})=U_{y_1,x_2}U_{\sigma }$ and 
$d(U_{y_1,1})=d(U_{\sigma })=d(U_{y_1,x_2})=0 
$.
Here $U_{\sigma }$ is a non-exact cocycle corresponding to $\sigma =[(x_2,x_1)+(y_2,y_1)+2(y_3,y_2)]$.
It is not formal but coformal.
\end{thm}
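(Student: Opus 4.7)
My plan is to apply Lemma \ref{dga} to the DGA-model $m=m(Baut_1X)$ computed in Proposition \ref{nonm}. Writing $D=d_1+d_2$ as in that proposition, the strategy is to (i) compute the linear cohomology $H^*(\Lambda U,d_1)$ and match the surviving classes with the generators $U_*$ of the claimed minimal model $M$, and then (ii) construct a DGA quasi-isomorphism $\varphi:M\to m$ sending each generator to a chosen cocycle representative. By Lemma \ref{dga}, any such $\varphi$ inducing $\varphi^\sharp:V\cong H^*(\Lambda U,d_1)$ will automatically be a quasi-isomorphism.

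For step (i), I would read off the linear part $d_1$ from the eleven formulas of Proposition \ref{nonm}: namely $d_1(V_{y_2,x_1})=V_{x_2,1}$, $d_1(V_{y_2,x_2})=V_{x_1,1}$, $d_1(V_{y_3,x_1^2})=-V_{y_3,y_1}$, $d_1(V_{y_3,x_1x_2})=-V_{y_3,y_2}+2V_{x_2,x_1}$, $d_1(V_{y_2,x_1^2})=-V_{y_2,y_1}+V_{x_2,x_1}$, together with $d_1(V_{y_3,x_2})=2V_{x_2,1}$ and $d_1(V_{y_1,x_1})=2V_{x_1,1}$ (these last two are the linear parts hidden inside $V_{w_1}$ and $V_{w_2}$). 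After cancellations, exactly eight linearly independent cohomology classes remain, namely $[V_{y_3,1}],\ [V_{y_2,1}],\ [V_{y_1,1}],\ [V_{y_3,x_1}],\ [V_{w_1}],\ [V_{w_2}],\ [V_{x_2,x_1}],\ [V_{y_1,x_2}]$, in the required degrees $12,10,8,8,6,4,3,2$. The class $[V_{x_2,x_1}]$ corresponds to $\sigma$ since $[V_{y_2,y_1}]=[V_{x_2,x_1}]$ and $[V_{y_3,y_2}]=2[V_{x_2,x_1}]$ in $H^*(\Lambda U,d_1)$.

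For step (ii), I would define $\varphi$ on generators as $\varphi(U_*)=V_*$ for the obvious pairings, with $\varphi(U_\sigma)=V_{x_2,x_1}+V_{y_2,y_1}+2V_{y_3,y_2}$ (the class dual to $\sigma$) or any proportional representative. The verification $\varphi\circ d=D\circ\varphi$ on generators reduces to checking that, modulo $d_1$-boundaries, the quadratic part $d_2(V_*)$ of Proposition \ref{nonm} equals the stated $D(U_*)$; for instance, $D(V_{y_3,1})\equiv V_{y_2,1}V_{y_3,y_2}\equiv 2V_{y_2,1}V_{x_2,x_1}$ which matches $\varphi(U_{y_2,1}U_\sigma)$ up to the free choice of scalar normalisation of $U_\sigma$, and similarly for $D(V_{y_2,1}),D(V_{y_3,x_1}),D(V_{w_1}),D(V_{w_2})$. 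Any discrepancy that is $d_1$-exact can be absorbed by adding a correction term of the form $\varphi(U_*)=V_*+(\text{decomposables})$; this is the standard inductive construction of a minimal model map, and since the linear part is an isomorphism there is no obstruction.

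The main obstacle will be the bookkeeping in step (ii): the formulas of Proposition \ref{nonm} contain many terms, and one must systematically track which pieces are $d_1$-exact and which survive. To confirm the theorem's final assertions, I observe that coformality is immediate from $dV\subset \Lambda^2V$ by inspection of the stated differential, while non-formality follows because $U_\sigma$ is a cocycle representing a non-trivial class that supports the non-trivial Massey products $\langle U_\sigma,U_\sigma,U_{y_1,x_2}\rangle=[U_{w_1}]$ and analogous higher Massey products, preventing any quasi-isomorphism $M\to(H^*(Baut_1X;\Q),0)$.
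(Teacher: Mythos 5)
Your overall skeleton is the same as the paper's: identify $H^*(U,d_1)$ of the model of Proposition \ref{nonm}, build a DGA-map $\varphi$ from the claimed minimal algebra into $m(Baut_1X)$, and invoke Lemma \ref{dga}. The gap is in step (ii), which is where the entire content of the theorem lies. You assert that any $d_1$-exact discrepancy can be absorbed by decomposable corrections and that ``since the linear part is an isomorphism there is no obstruction.'' That is not a valid inference: you are not constructing \emph{the} minimal model inductively (where the differential is discovered along the way), you are trying to map a \emph{prescribed} $(\Lambda U_*,d)$ with a purely quadratic differential into $m$. At each stage the obstruction to extending $\varphi$ is a cohomology class of $m$, and its vanishing is exactly equivalent to the claimed differential being correct; checking ``$d_2(V_*)$ equals the stated $D(U_*)$ modulo $d_1$-boundaries'' only pins down the quadratic part on the $E_1$-term of the word-length spectral sequence and does not exclude higher-order terms in the true minimal model differential (the paper's own Example \ref{su}(3) has a cubic term $dv_{12}=v_3v_4'v_6$ in a closely analogous situation, so this is a real possibility, not a formality). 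The paper's proof consists precisely of exhibiting the explicit $\varphi$, including the long decomposable correction terms for $U_{w_1},U_{y_3,x_1},U_{y_1,1},U_{y_2,1},U_{y_3,1}$, and verifying $D\circ\varphi=\varphi\circ d$; your proposal defers exactly this verification as ``bookkeeping,'' so the proof is incomplete at its decisive point.

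A secondary error: your non-formality argument is not correct as stated. Since $|U_\sigma|=3$ we have $U_\sigma^2=0$, and $U_\sigma U_{w_2}=\pm\,d(U_{w_1})$, so the triple product $\langle U_\sigma,U_\sigma,U_{y_1,x_2}\rangle$ you invoke is represented by an exact element and vanishes; moreover $U_{w_1}$ is not even a cocycle ($dU_{w_1}=U_{w_2}U_\sigma$), so ``$[U_{w_1}]$'' is not a cohomology class. Non-formality must be argued differently (e.g.\ via a genuinely nontrivial higher-order product or by comparing with $(H^*,0)$ directly). The coformality observation from $dV\subset\Lambda^2V$ is fine.
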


\begin{proof}
Indeed, a DGA-map 
$\varphi \colon M(Baut_1X) \to m(Baut_1X)$ is given as 
{\small 
\begin{align*}
\varphi (U_{y_1,x_2})=&\ V_{y_1,x_2} \\
\varphi (U_{\sigma })=&\ V_{x_2,x_1}+2V_{y_3,y_2}+V_{y_2,y_1} \\
\varphi (U_{w_2})=&\ 2V_{w_2}+3V_{y_2,x_1^2}V_{y_1,x_2}-2V_{y_3,x_1x_2}V_{y_1,x_2} \\
\varphi (U_{w_1})=&\ -6V_{w_1}-4V_{w_2}V_{y_3,x_1x_2}+10V_{w_2}V_{y_2,x_1^2}+6V_{y_2,x_2}V_{y_3,x_1x_2} -24V_{y_2,x_2}V_{y_2,x_1^2} \\
&+6V_{y_3,x_1^2}V_{y_1,x_2}+2V_{y_3,x_1x_2}^2V_{y_1x_2}-\dfrac{7}{2}V_{y_2,x_1^2}^2V_{y_1,x_2} \\
\varphi (U_{y_3,x_1})=&\ -36V_{y_3,x_1}+12V_{w_1}V_{y_3,x_1x_2}+6V_{w_1}V_{y_2,x_1^2}-36V_{y_2,x_1}V_{y_3,x_1x_2}-72V_{y_2,x_2}V_{y_3,x_1^2} \\
&+36V_{w_2}V_{y_3,x_1^2}+4V_{w_2}V_{y_3,x_1x_2}^2-11V_{w_2}V_{y_2,x_1^2}^2+16V_{w_2}V_{y_3,x_1x_2}V_{y_2,x_1^2} \\
&-12V_{y_2,x_2}V_{y_3,x_1x_2}^2+24V_{y_2,x_2}V_{y_2,x_1^2}^2-30V_{y_2,x_2}V_{y_3,x_1x_2}V_{y_2,x_1^2} \\
&-12V_{y_3,x_1^2}V_{y_3,x_1x_2}V_{y_1,x_2}-6V_{y_3,x_1^2}V_{y_1,x_2}V_{y_2,x_1^2}-\dfrac{4}{3}V_{y_3,x_1x_2}^3V_{y_1,x_2} \\
&+\dfrac{11}{6}V_{y_2,x_1^2}^3V_{y_1,x_2}-6V_{y_3,x_1x_2}^2V_{y_1,x_2}V_{y_2,x_1^2}+V_{y_2,x_1^2}^2V_{y_3,x_1x_2}V_{y_1,x_2} \\
\varphi (U_{y_1,1})=&\ 3V_{y_1,1}+3V_{y_2,x_1}V_{y_1,x_2}+V_{w_2}^2+3V_{y_2,x_2}^2-3V_{w_2}V_{y_2,x_2} +6V_{y_2,x_2}V_{y_1,x_2}V_{y_2,x_1^2} \\
&-2V_{w_2}V_{y_1,x_2}V_{y_2,x_1^2}+V_{y_1,x_2}^2V_{y_2,x_1^2}^2 \\
\varphi (U_{y_2,1})=&\ 18V_{y_2,1}-6V_{y_1,1}V_{y_3,x_1x_2}+15V_{y_1,1}V_{y_2,x_1^2}+9V_{y_3,x_1}V_{y_1,x_2}-3V_{w_1}V_{w_2}+18V_{y_2,x_1}V_{y_2,x_2} \\
&-6V_{w_1}V_{y_1,x_2}V_{y_2,x_1^2}+3V_{y_2,x_1}V_{y_3,x_1x_2}V_{y_1,x_2}+15V_{y_2,x_1}V_{y_1,x_2}V_{y_2,x_1^2}-2V_{w_2}^2V_{y_3,x_1x_2} \\
&+5V_{w_2}^2V_{y_2,x_1^2}-6V_{y_2,x_2}^2V_{y_3,x_1x_2}+33V_{y_2,x_2}^2V_{y_2,x_1^2}+9V_{w_2}V_{y_2,x_2}V_{y_3,x_1x_2}-27V_{w_2}V_{y_2,x_2}V_{y_2,x_1^2} \\
&-6V_{w_2}V_{y_3,x_1^2}V_{y_1,x_2}+18V_{y_2,x_2}V_{y_3,x_1^2}V_{y_1,x_2}-4V_{w_2}V_{y_2,x_1^2}^2V_{y_1,x_2}-2V_{w_2}V_{y_3,x_1x_2}V_{y_1,x_2}V_{y_2,x_1^2} \\
&+6V_{y_2,x_2}V_{y_2,x_1^2}^2V_{y_1,x_2}+12V_{y_2,x_2}V_{y_3,x_1x_2}V_{y_1,x_2}V_{y_2,x_1^2}+6V_{y_3,x_1^2}V_{y_1,x_2}^2V_{y_2,x_1^2} \\
&+4V_{y_1,x_2}^2V_{y_2,x_1^2}^2V_{y_3,x_1x_2}-V_{y_1,x_2}^2V_{y_2,x_1^2}^3 
\end{align*}
\begin{align*}
\varphi (U_{y_3,1})=&\ 54V_{y_3,1}+18V_{y_2,1}V_{y_3,x_1x_2}-18V_{y_2,1}V_{y_2,x_1^2}+54V_{y_1,1}V_{y_3,x_1^2}+54V_{y_3,x_1}V_{y_2,x_2}+18V_{w_1}^2\\
&+54V_{y_2,x_1}^2 -54V_{w_1}V_{y_2,x_1}+6V_{y_1,1}V_{y_3,x_1x_2}^2-\frac{33}{2}V_{y_1,1}V_{y_2,x_1^2}^2+24V_{y_1,1}V_{y_3,x_1x_2}V_{y_2,x_1^2}\\
&-18V_{y_3,x_1}V_{y_3,x_1x_2}V_{y_1,x_2} +45V_{y_3,x_1}V_{y_1,x_2}V_{y_2,x_1^2}+6V_{w_1}V_{w_2}V_{y_3,x_1x_2}-15V_{w_1}V_{w_2}V_{y_2,x_1^2}\\
&-36V_{w_1}V_{y_2,x_2}V_{y_3,x_1x_2}+36V_{w_1}V_{y_2,x_2}V_{y_2,x_1^2} +6V_{y_2,x_2}^2V_{y_3,x_1x_2}V_{y_2,x_1^2}-9V_{y_2,x_2}V_{y_2,x_1^2}^3V_{y_1,x_2}\\
&-36V_{w_1}V_{y_3,x_1^2}V_{y_1,x_2}+72V_{y_2,x_1}V_{y_2,x_2}V_{y_3,x_1x_2}-18V_{y_2,x_1}V_{y_2,x_2}V_{y_2,x_1^2}+54V_{y_2,x_1}V_{y_3,x_1^2}V_{y_1,x_2} \\
&+108V_{y_2,x_2}^2V_{y_3,x_1^2}-54V_{w_2}V_{y_2,x_2}V_{y_3,x_1^2}+6V_{w_1}V_{y_2,x_1^2}^2V_{y_1,x_2}-24V_{w_1}V_{y_3,x_1x_2}V_{y_1,x_2}V_{y_2,x_1^2} \\
&-12V_{y_2,x_1}V_{y_3,x_1x_2}^2V_{y_1,x_2}-\frac{33}{2}V_{y_2,x_1}V_{y_2,x_1^2}^2V_{y_1,x_2}+69V_{y_2,x_1}V_{y_3,x_1x_2}V_{y_1,x_2}V_{y_2,x_1^2} +2V_{w_2}^2V_{y_3,x_1x_2}^2 \\
&+\frac{25}{2}V_{w_2}^2V_{y_2,x_1^2}^2-10V_{w_2}^2V_{y_3,x_1x_2}V_{y_2,x_1^2}+24V_{y_2,x_2}^2V_{y_3,x_1x_2}^2+\frac{75}{2}V_{y_2,x_2}^2V_{y_2,x_1^2}^2  \\
&+18V_{y_3,x_1^2}^2V_{y_1,x_2}^2-12V_{w_2}V_{y_2,x_2}V_{y_3,x_1x_2}^2-\frac{87}{2}V_{w_2}V_{y_2,x_2}V_{y_2,x_1^2}^2+15V_{w_2}V_{y_2,x_2}V_{y_3,x_1x_2}V_{y_2,x_1^2} \\
&+12V_{w_2}V_{y_3,x_1^2}V_{y_3,x_1x_2}V_{y_1,x_2}-30V_{w_2}V_{y_3,x_1^2}V_{y_1,x_2}V_{y_2,x_1^2}+54V_{y_2,x_2}V_{y_3,x_1^2}V_{y_1,x_2}V_{y_2,x_1^2} \\
&+5V_{w_2}V_{y_2,x_1^2}^3V_{y_1,x_2}+8V_{w_2}V_{y_3,x_1x_2}^2V_{y_1,x_2}V_{y_2,x_1^2}-22V_{w_2}V_{y_3,x_1x_2}V_{y_1,x_2}V_{y_2,x_1^2}^2 \\
&+36V_{y_2,x_2}V_{y_3,x_1x_2}V_{y_1,x_2}V_{y_2,x_1^2}^2-6V_{y_3,x_1^2}V_{y_1,x_2}^2V_{y_2,x_1^2}^2+24V_{y_3,x_1^2}V_{y_3,x_1x_2}V_{y_1,x_2}^2V_{y_2,x_1^2} \\
&+\frac{1}{2}V_{y_1,x_2}^2V_{y_2,x_1^2}^4-4V_{y_3,x_1x_2}V_{y_1,x_2}^2V_{y_2,x_1^2}^3+8V_{y_3,x_1x_2}^2V_{y_1,x_2}^2V_{y_2,x_1^2}^2
\end{align*}
}
Here we can directly check $D\circ \varphi=\varphi\circ d$.
Thus we have done from Lemma \ref{dga}.
\end{proof}

\begin{ques}
When $X$ is a homogeneous space, is $Baut_1X$  coformal ?
\end{ques}


\begin{thebibliography}{amsalpha}

\bibitem{A}
M. Amann,
{\it Non-formal homogeneous spaces},
Math. Z. {\bf 274} (2013) 1299-1325


\bibitem{DL}
A. Dold and R. Lashof, {\it Principal quasi-fibrations and fibre homotopy equivalence
of bundles}, Illinois J. Math. {\bf 3} (1959), 285-305

\bibitem{FHT}
Y.~F\'{e}lix, S.~Halperin and J.~C.~Thomas,
\textit{Rational homotopy theory},
Graduate Texts in Mathematics \textbf{205}, Springer-Verlag, 2001.


\bibitem{FLS}
Y.~F\'{e}lix, G.Lupton  and S.B.Smith,
{\it Homotopy theory of function spaces and related topics},
Contemporary Math. {\bf 519}
2009.

\bibitem{FT}
Y.~F\'{e}lix and  D.Tanr\'{e},
\textit{H-space structure on pointed mapping sapces},
Algebraic Geometric Topology, {\bf 5}
(2005) 713-724

\bibitem{G} J.B. Gatsinzi, {\it The homotopy Lie algebra of classifying spaces},
J.Pure and Appl. Alg. {\bf 120} (1997) 281-289



\bibitem{G1}
D.H.Gottlieb, {\it Evaluation subgroups of homotopy groups},
Amer. J. Math., {\bf 91} (1969) 729-756



\bibitem{GHV}W.Greub, S.Halperin and R.Vanstone,
{\it Connection, curvature and cohomology III,} Academic press [1976] 


\bibitem{Hal}
S.Halperin, {\it Finiteness in the minimal models of Sullivan},
Trans. A.M.S. {\bf 230} (1977) 173-199

\bibitem{Ha}
S.Halperin,
\textit{Rational homotopy and torus actions},
London Math. Soc. Lecture Note Series \textbf{93}, 
Cambridge Univ. Press (1985) 293-306










\bibitem{LS}G.Lupton and B.S.Smith, {\it Realizing spaces as classifying spaces},
arXiv:1502.05625v1

\bibitem{Mc}J. McClearly, {\it A user's guide to spectral sequences}, 2nd Edition, Cambridge Stud. Adv. Math., {\bf 58}, Cambridge Univ. Press, 2001.

\bibitem{M}W.Meier, {\it Rational universal fibrations and flag manifolds}
Math.Ann. 258 (1982) 329-340





\bibitem{Mil}J. Milnor, {\it On space having the homotopy type of a CW-complex},
Trans AMS. {\bf 90} (1959) 272-280

\bibitem{Mi}J. Milnor, {\it On the characteristic classes for spherical fibrations},
Comm.Math.Helv. {\bf 43} (1968) 51-73

\bibitem{Pa}P.-E. Parent,
{\it Formal and non-formal homogeneous spaces of small rank}, Univ. of Ottawa,  
Thesis (1996)

\bibitem{Q}D. Quillen, {\it Rational homotopy theory}, Ann. Math. {\bf 90} (1969) 205-295

\bibitem{Sa}P. Salvatore, {\it Rational homotopy nilpotency of self-equivalences},
Topology and its Appl.
{\bf 77} (1997) 37-50

\bibitem{SS} M. Schlessinger and J. Stasheff, {\it Deformation theory and rational homotopy type},  
arXiv:1211.1647v1 

\bibitem{ST}H. Shiga and M. Tezuka, {\it Rational fibrations, homogeneous spaces with positive Euler characteristics 
and Jacobians},
Ann. Inst. Fourier (Grenoble) {\bf 37} (1987) 81-106

\bibitem{s2}S.B. Smith, 
{\it Rational type of classifying spaces for fibrations},
Contemporary Math. 274 (2001) 299-307

\bibitem{s1}S.B. Smith, 
{\it The rational homotopy Lie algebra
of classifying spaces  for formal two-stage spaces},
J.P.A.A. 160 (2001) 333-343

\bibitem{Sp}E.H. Spanier,
{\it Algebraic Topology}, Springer G.T.M. 1966


\bibitem{S}D.Sullivan, {\it Infinitesimal computations in topology},
I.H.E.S., {\bf 47} (1978) 269-331

\bibitem{T}
D. Tannr\'{e},  {\it Homotopie Rationnelle:
Mod\`{e}les de Chen, Quillen, Sullivan},
Lecture Note in Math. Springer {\bf 1025} (1983) 

\bibitem{Y}
T. Yamaguchi, {\it When is the classifying space for elliptic fibrations rank one ?} 
Bull. Korean Math {\bf 42} (2005) 521-525 

\end{thebibliography}
\end{document}